\documentclass[12pt]{amsart}

\usepackage{amssymb}
\usepackage{fullpage}

% predefined color names in LATeX
% black, blue, brown, cyan, darkgray, gray, green, lightgray, lime, magenta, olive, orange, pink, purple, red, teal, violet, white, yellow.

% for better links, back references and such
\usepackage{xcolor}
\usepackage{hyperref}
\hypersetup{colorlinks=true,citecolor=purple,linkcolor=purple,urlcolor=gray,breaklinks=true}

% for displaying content of \label, \cite and \ref
%\usepackage[color,notcite,notref]{showkeys} % only displays \label
%\definecolor{labelkey}{rgb}{0.5,0.5,0.5} % used for \label
%\definecolor{refkey}{rgb}{0,0,1} % used for \cite an \ref

%\usepackage{amsmath}

\numberwithin{equation}{section}
\newtheorem{thm}{Theorem}[section]
\newtheorem{prop}[thm]{Proposition}
\newtheorem{lem}[thm]{Lemma}
\newtheorem{cor}[thm]{Corollary}

\newtheorem{prob}[thm]{Problem}
\theoremstyle{definition}
\newtheorem{definition}[thm]{Definition}
\newtheorem{remark}[thm]{Remark}
\newtheorem{example}[thm]{Example}

\def\ldiv{\backslash}
\def\rdiv{/}

\newcommand{\vhi}{\varphi}
\newcommand{\m}{^{-1}}

\newcommand{\com}{\operatorname{Com}}
\newcommand{\sym}{\operatorname{Sym}}
\newcommand{\mlt}{\operatorname{Mlt}}
\newcommand{\inn}{\operatorname{Inn}}
\newcommand{\atp}{\operatorname{Atp}}

\newcommand{\aut}{\operatorname{Aut}}
\newcommand{\id}{\operatorname{id}}

\newcommand{\nuc}{\operatorname{Nuc}}
\newcommand{\lps}{\operatorname{Psa}_\ell}

\begin{document}

\title{Abelian congruences and solvability\\ in Moufang loops}

\author{Ale\v{s} Dr\'{a}pal}
\address[Dr\'apal]{Department of Algebra\\ Faculty of Mathematics and Physics\\ Charles University\\ Sokolovsk\'a 83\\ 186 75 Praha 8, Czech Republic}
\email[Dr\'apal]{drapal@karlin.mff.cuni.cz}

\author{Petr Vojt\v echovsk\'y}
\address[Vojt\v{e}chovsk\'y]{Department of Mathematics\\ University of Denver\\ 2390 S.~York St.\\ Denver, CO 80208, USA}
\email[Vojt\v{e}chovsk\'y]{petr@math.du.edu}

\thanks{A.~Dr\'apal supported by the INTER-EXCELLENCE project LTAUSA19070 of M\v SMT Czech Republic. P.~Vojt\v{e}chovsk\'y supported by the Simons Foundation Mathematics and Physical Sciences Collaboration Grant for Mathematicians no.~855097 and by the PROF grant of the University of Denver.}

\begin{abstract}
In groups, an abelian normal subgroup induces an abelian congruence. We construct a class of centrally nilpotent Moufang loops containing an abelian normal subloop that does not induce an abelian congruence. On the other hand, we prove that in $6$-divisible Moufang loops, every abelian normal subloop induces an abelian congruence.

In loops, congruence solvability adopted from the universal-algebraic commutator theory of congruence modular varieties is strictly stronger than classical solvability adopted from group theory. It is an open problem whether the two notions of solvability coincide in Moufang loops. We prove that they coincide in $6$-divisible Moufang loops and in Moufang loops of odd order. In fact, we show that every Moufang loop of odd order is congruence solvable, thus strengthening Glauberman's Odd Order Theorem for Moufang loops.
\end{abstract}

\keywords{Moufang loop, extra loop, solvability, congruence solvability, abelian congruence, pseudoautomorphism, semiautomorphism}

\subjclass{20N05}

\maketitle

We investigate abelian normal subloops and the theory of solvability in Moufang loops. There are two notions of solvability in loop theory, one adopted from solvability in group theory, called \emph{classical solvability} here, and another adopted from commutator theory in congruence modular varieties, called \emph{congruence solvability} here.

When translated into the language of normal series $Q=Q_0\ge Q_1\ge \cdots \ge Q_n=1$, the difference between the two solvability notions is that classical solvability requires all factors $Q_i/Q_{i+1}$ to be abelian (that is, commutative groups), while congruence solvability requires a potentially stronger condition, namely that every factor $Q_i/Q_{i+1}$ induces an abelian congruence of $Q/Q_{i+1}$. Whether a normal subloop of a loop $Q$ is merely abelian or whether it induces an abelian congruence of $Q$ can be seen on the level of multiplication tables, which must have a more rigid structure in the latter case, cf.~Subsection \ref{Ss:Abelianess}.

Every congruence solvable loop is classically solvable but the converse is not true. The two notions of solvability coincide in groups. More generally, they coincide in any loop $Q$ in which every abelian normal subloop induces an abelian congruence of $Q$. The situation is delicate, however, since it is certainly possible for a loop $Q$ to be congruence solvable, yet posses an abelian normal subloop that does not induce an abelian congruence of $Q$.

It is an open problem whether the two notions of solvability coincide in Moufang loops. In this paper we offer a general construction of centrally nilpotent Moufang loops that contain an abelian normal subloop that does not induce an abelian congruence, cf.~Proposition \ref{Pr:Example}. On the other hand, we show that if $Q$ is a $3$-divisible Moufang loop and $X$ is a $2$-divisible abelian normal subloop of $Q$, then $X$ induces an abelian congruence of $Q$, cf.~Theorem \ref{Th:AbelianCongruence}. In particular, in a $6$-divisible Moufang loop, every abelian normal subloop induces an abelian congruence, cf.~Corollary \ref{Cr:AbelianCongruence}, and hence the two notions of solvability coincide in $6$-divisible Moufang loops, cf.~Corollary \ref{Cr:Coincide}. Up to that point, the exposition is self-contained (modulo basic results from loop theory) and the arguments are elementary in nature. This is in contrast with other results in Moufang loops, such as the Lagrange Theorem, whose only known proofs rely on the classification of finite simple groups.

Continuing, we build upon recent deep results of Cs\"org\H{o} \cite{Csorgo} on the nucleus in Moufang loops and prove that every Moufang loop of odd order is congruence solvable, cf.~Theorem \ref{Th:MoufOdd}. This strengthens a well-known result of Glauberman \cite{GlaubermanII} that states that every Moufang loop of odd order is classically solvable. Theorem \ref{Th:MoufOdd} implies the finitary version of Corollary \ref{Cr:Coincide}.

Background material on loops, Moufang loops and divisibility in power associative loops is collected in Section \ref{Sc:Background}. Abelianess and solvability for loops are discussed in Section \ref{Sc:Solvability}.

\section{Background on loops and Moufang loops}\label{Sc:Background}

\subsection{Loops}

See \cite{BruckBook} or \cite{PflugfelderBook} for an introduction to the theory of loops. A loop $Q$ is a magma $(Q,\cdot,1)$ with identity element $1$ such that all left translations $L_x:Q\to Q$, $L_x(y)=x\cdot y$ and all right translations $R_x:Q\to Q$, $R_x(y)=y\cdot x$ are permutations of $Q$.

We mostly denote the multiplication operation $\cdot$ by juxtaposition and we take advantage of $\cdot$ to indicate priority of multiplications in products, e.g., $x\cdot yz$ stands for $x\cdot(y\cdot z)$. The implicit division operations will be denoted by $x\ldiv y = L_x^{-1}(y)$ and $y\rdiv x = R_x^{-1}(y)$.

A mapping $f:Q_1\to Q_2$ of loops is a \emph{homomorphism} if $f(xy)=f(x)f(y)$ for all $x,y\in Q_1$. Then the identities $f(x\ldiv y) = f(x)\ldiv f(y)$ and $f(x\rdiv y) = f(x)\rdiv f(y)$ are automatically satisfied.

Let $\sym(Q)$ denote the symmetric group on $Q$. The \emph{multiplication group} of $Q$ is the subgroup
\begin{displaymath}
	\mlt(Q)=\langle L_x,R_x:x\in Q\rangle
\end{displaymath}
 of $\sym(Q)$. The \emph{inner mapping group} $\inn(Q)$ of $Q$ is the stabilizer of $1$ in $\mlt(Q)$. It is well known that
\begin{displaymath}
	\inn(Q)=\langle T_x,R_{x,y},L_{x,y}:x,y\in Q\rangle,
\end{displaymath}
where
\begin{displaymath}
    T_x = R_x^{-1}L_x,\quad R_{x,y}=R_{xy}^{-1}R_yR_x,\quad\text{and}\quad L_{x,y}=L_{xy}^{-1}L_xL_y.
\end{displaymath}
In groups, the inner mapping group is the familiar inner automorphism group. However, inner mappings of loops need not be automorphisms.

A subloop $X$ of a loop $Q$ is \emph{normal}, denoted by $X\unlhd Q$, if it is a kernel of a loop homomorphism. It turns out that a subloop $X\le Q$ is normal if and only if $f(X)=X$ for every $f\in\inn(Q)$.

The \emph{left nucleus}, \emph{middle nucleus} and the \emph{right nucleus} of $Q$ are the respective subloops
\begin{align*}
	\nuc_\ell(Q) &= \{x\in Q:x(yz)=(xy)z\text{ for all }y,z\in Q\},\\
	\nuc_m(Q) &=  \{x\in Q:y(xz)=(yx)z\text{ for all }y,z\in Q\},\\
	\nuc_r(Q) &=  \{x\in Q:y(zx)=(yz)x\text{ for all }y,z\in Q\}.
\end{align*}
The \emph{nucleus} $\nuc(Q)$ of $Q$ is the intersection of the above three nuclei. A subloop $X\le Q$ is \emph{nuclear} if $X\le\nuc(Q)$.

\begin{lem}\label{Lm:NucAutT}
Let $X$ be a normal subloop of a loop $Q$ such that $X\le\nuc_m(Q)\cap\nuc_r(Q)$. Then for every $u\in Q$ the inner mapping $T_u$ restricts to an automorphism of $X$
\end{lem}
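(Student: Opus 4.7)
My plan is straightforward: show the bijection first, then the homomorphism property via a direct associativity computation, using only the defining equation $T_u(z)\cdot u = uz$ and membership in the middle and right nuclei.

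First, since $X\unlhd Q$ and $T_u\in\inn(Q)$, the characterization of normality by inner-mapping invariance gives $T_u(X)=X$, so $T_u$ restricts to a bijection of $X$. All that remains is to verify that $T_u(xy)=T_u(x)T_u(y)$ for $x,y\in X$. The key observation is that the right translation $R_u$ is injective, so it suffices to prove
\begin{displaymath}
    \bigl(T_u(x)T_u(y)\bigr)\cdot u \;=\; T_u(xy)\cdot u.
\end{displaymath}
The right-hand side equals $u(xy)$ by the defining identity of $T_u$.

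For the left-hand side I will rewrite $(T_u(x)T_u(y))u$ by a short sequence of nuclear moves. Since $T_u(y)\in X\le \nuc_m(Q)$, we can shift the parenthesization to get $(T_u(x)T_u(y))u = T_u(x)\cdot(T_u(y)u)$, and $T_u(y)u = uy$ by definition of $T_u$. Thus the left-hand side becomes $T_u(x)\cdot(uy)$. Now $y\in X\le\nuc_r(Q)$ permits $T_u(x)\cdot(uy) = (T_u(x)\cdot u)\cdot y = (ux)\cdot y$, again using $T_u(x)u=ux$. Finally $x\in X\le\nuc_m(Q)$ gives $(ux)y = u(xy)$, matching the right-hand side.

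Cancelling $u$ on the right yields $T_u(x)T_u(y)=T_u(xy)$, completing the argument. There is no real obstacle here; the only thing to watch is to be sure each associativity step uses an element that actually lies in $X$ (hence in the appropriate nucleus). Note in particular that we rely on $T_u(y)\in X$, which is why the normality hypothesis is needed at this step in addition to being used to get surjectivity onto $X$. Membership in the left nucleus is not required at any point.
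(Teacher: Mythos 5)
Your proof is correct and follows essentially the same route as the paper's: both reduce to cancelling $R_u$ and then perform the identical chain of three nuclear reassociations (middle nucleus for $T_u(y)$, right nucleus for $y$, middle nucleus for $x$), since $T_u(x)=(ux)\rdiv u$. Your remark that normality is needed to ensure $T_u(y)\in X$ before invoking the nucleus is a nice explicit touch, but the argument is the same.
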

\begin{proof}
Since $X\unlhd Q$, every inner mapping restricts to a permutation of $X$, and we only need to show that $T_u(xy) = T_u(x)T_u(y)$ holds for all $x,y\in X$. Hence we need to verify $(u\cdot xy)\rdiv u = ((ux)\rdiv u)((uy)\rdiv u)$ for every $x,y\in X$. This is equivalent to $u\cdot xy =  ((ux)\rdiv u)((uy)\rdiv u)\cdot u$. Since $y$ and $(uy)/u$ are elements of $X\le\nuc_m(Q)\cap\nuc_r(Q)$, we calculate
\begin{align*}
	 ((ux)\rdiv u)((uy)\rdiv u)\cdot u &= ((ux)\rdiv u)\cdot ((uy)\rdiv u)u = ((ux)\rdiv u)\cdot uy\\
		& = ((ux)\rdiv u)u\cdot y = (ux)y = u\cdot xy.\qedhere
\end{align*}
\end{proof}

Note that Lemma \ref{Lm:NucAutT} also holds under the dual assumption $X\unlhd Q$ and $X\le\nuc_\ell(Q)\cap\nuc_m(Q)$. See \cite[Lemma 1.7]{Drapal} for a slightly stronger statement.

\begin{cor}\label{Cr:NucAut}
Let $X$ be a nuclear normal subloop of a loop $Q$. Then every inner mapping of $Q$ restricts to an automorphism of $X$.
\end{cor}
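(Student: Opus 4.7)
\medskip

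\noindent\textbf{Proof proposal.}
Since $\inn(Q)=\langle T_u,R_{u,v},L_{u,v}:u,v\in Q\rangle$, and since normality of $X$ already guarantees that each inner mapping restricts to a \emph{permutation} of $X$, it suffices to check that each of the three generating families restricts to a homomorphism (hence automorphism) of $X$. Once each generator restricts to an automorphism of $X$, arbitrary compositions do as well, and the corollary follows.

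For the generator $T_u$, I would simply invoke Lemma \ref{Lm:NucAutT}: since $X\le\nuc(Q)\le\nuc_m(Q)\cap\nuc_r(Q)$, the hypotheses of the lemma are met and $T_u\!\restriction_X\in\aut(X)$. For the generators $R_{u,v}$ and $L_{u,v}$, I expect something even stronger, namely that they act trivially on $X$. For $x\in X\le\nuc_\ell(Q)$ one has $(xu)v=x(uv)$, so
\begin{displaymath}
	R_{u,v}(x)=R_{uv}^{-1}R_vR_u(x)=\bigl((xu)v\bigr)\rdiv(uv)=\bigl(x(uv)\bigr)\rdiv(uv)=x.
\end{displaymath}
Dually, for $x\in X\le\nuc_r(Q)$ we have $u(vx)=(uv)x$, so
\begin{displaymath}
	L_{u,v}(x)=L_{uv}^{-1}L_uL_v(x)=(uv)\ldiv\bigl(u(vx)\bigr)=(uv)\ldiv\bigl((uv)x\bigr)=x.
\end{displaymath}
Hence $R_{u,v}$ and $L_{u,v}$ restrict to the identity on $X$, which is trivially an automorphism.

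There is no real obstacle here; the corollary is essentially a bookkeeping exercise once the lemma is in hand. The only point requiring (minor) care is the observation that automorphism-restrictions are closed under composition, which uses $f(X)=X$ for every $f\in\inn(Q)$, itself a consequence of normality of $X$.
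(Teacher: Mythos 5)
Your proof is correct and follows essentially the same route as the paper: the generators $L_{u,v}$ and $R_{u,v}$ restrict to the identity on $X$ because $X\le\nuc_\ell(Q)\cap\nuc_r(Q)$, and $T_u$ is handled by Lemma \ref{Lm:NucAutT} via $X\le\nuc_m(Q)\cap\nuc_r(Q)$. You merely write out the two one-line computations that the paper leaves implicit.
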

\begin{proof}
Since $X\le\nuc_\ell(Q)\cap\nuc_r(Q)$, for every $u,v\in Q$ the inner mappings $L_{u,v}$, $R_{u,v}$ restrict to the identity mapping on $X$. We are done by Lemma \ref{Lm:NucAutT}.
\end{proof}

The \emph{center} of $Q$ is the subloop
\begin{displaymath}
	Z(Q)=\{x\in\nuc(Q):xy=yx\text{ for all }y\in Q\}.
\end{displaymath}
A \emph{central} subloop of $Q$ is a subloop of $Z(Q)$. Every central subloop of $Q$ is normal in $Q$.

A loop $Q$ is \emph{power associative} if every element of $Q$ generates an associative subloop of $Q$, that is, a subgroup. In particular, the powers $x^i$ of elements are well-defined in power associative loops, $x^{-1}=x\ldiv 1 = 1\rdiv x$, etc. A loop $Q$ is \emph{diassociative} if any two elements of $Q$ generate a subgroup.

A permutation $f$ of a loop $Q$ is a \emph{(left) pseudoautomorphism} of $Q$ if there exists $c\in Q$ such that
\begin{displaymath}
    cf(x)\cdot f(y) = cf(xy)
\end{displaymath}
for every $x,y\in Q$. The element $c$ is then called a \emph{(left) companion} of $f$. The set of all pairs $(c,f)\in Q\times\sym(Q)$, where $f$ is a pseudoautomorphism of $Q$ and $c$ is a companion of $f$, forms a group $\lps(Q)$ under the operations
\begin{equation}\label{Eq:LPs}
    (c,f)(d,g) = (cf(d),fg)\quad\text{and}\quad (c,f)^{-1} = (f^{-1}(c\ldiv 1),f^{-1}).
\end{equation}

A permutation $f\in\sym(Q)$ is a \emph{semiautomorphism} of $Q$ if $f(1)=1$ and
\begin{displaymath}
    f(x\cdot yx)= f(x)\cdot f(y)f(x)
\end{displaymath}
holds for all $x,y\in Q$. If $f$ is a semiautomorphism of a power associative loop $Q$ then an inductive argument shows that $f(x^i) = f(x)^i$ for every $i\in\mathbb Z$.

A triple $(f,g,h)$ of permutations of $Q$ is an \emph{autotopism} of $Q$ if $f(x)g(y) = h(xy)$ holds for all $x,y\in Q$. The autotopisms of $Q$ form a group $\atp(Q)$ under componentwise composition, the \emph{autotopism group} of $Q$. The following well-known result describes all autotopisms with a trivial component.

\begin{lem}\label{Lm:PrincipalAtp}
Let $Q$ be a loop. Then:
\begin{enumerate}
\item[(i)] $(\id_Q,g,h)\in\atp(Q)$ iff $g=h=R_x$ for some $x\in\mathrm{Nuc}_r(Q)$.
\item[(ii)] $(f,\id_Q,h)\in\atp(Q)$ iff $f=h=L_x$ for some $x\in\mathrm{Nuc}_\ell(Q)$.
\item[(iii)] $(f,g,\id_Q)\in\atp(Q)$ iff $f=R_x^{-1}$ and $g=L_x$ for some $x\in\mathrm{Nuc}_m(Q)$.
\end{enumerate}
\end{lem}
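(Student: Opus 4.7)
All three parts follow the same plan: I would substitute $x=1$ or $y=1$ (or both) in the defining identity $f(x)g(y)=h(xy)$ to pin down two of the three permutations as explicit translations, then reinterpret the remaining identity as a one-sided nuclear condition. The converse direction in each case is immediate from the definition of the corresponding nucleus.

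For (i), setting $x=1$ forces $g=h$, and then setting $y=1$ gives $g(x)=x\cdot g(1)$, so $g=h=R_a$ with $a:=g(1)$. The autotopism identity then reads $x(ya)=(xy)a$ for all $x,y\in Q$, which is precisely the condition $a\in\nuc_r(Q)$. Part (ii) is the left-right mirror of (i): set $y=1$ to obtain $f=h$, then $x=1$ to identify both maps as $L_a$ with $a:=f(1)$, and observe that the residual identity is exactly $a\in\nuc_\ell(Q)$.

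For (iii), setting $y=1$ in $f(x)g(y)=xy$ yields $f(x)=x\rdiv g(1)$, hence $f=R_b\m$ with $b:=g(1)$. Substituting $x=b$ and using $f(b)=1$ gives $g(y)=by$, so $g=L_b$. The autotopism identity now becomes $(u\rdiv b)\cdot bv=uv$ for all $u,v\in Q$; the change of variables $u=wb$ (valid since $R_b$ is a bijection) turns this into $w\cdot bv=wb\cdot v$, which is exactly $b\in\nuc_m(Q)$.

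The only mild obstacle arises in (iii): setting $x=1$ alone does not present $g$ as a left translation, since it merely writes $g(v)$ as left division by $f(1)$. The extra substitution $x=b$ is what makes it possible to recognize $g$ as $L_b$ and thereby recover a clean middle-nuclear condition after the final change of variables. Everything else is routine bookkeeping with the loop division operations.
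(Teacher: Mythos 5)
Your proof is correct: the substitutions $x=1$, $y=1$ (and, in part (iii), $x=g(1)$) correctly pin down the unknown components as translations, and the residual identities are exactly the stated nuclear conditions; the converses are indeed immediate. The paper states this lemma as well-known and gives no proof of its own, and your argument is the standard one, so there is nothing to reconcile.
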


\begin{cor}\label{Cr:TrivNucAtp}
Let $Q$ be a loop. Then:
\begin{enumerate}
\item[(i)] If $\mathrm{Nuc}_r(Q)=1$ and $(f,g,h)\in\atp(Q)$ then $g$ and $h$ are determined by $f$.
\item[(ii)] If $\mathrm{Nuc}_\ell(Q)=1$ and $(f,g,h)\in\atp(Q)$ then $f$ and $h$ are determined by $g$.
\item[(iii)] If $\mathrm{Nuc}_m(Q)=1$ and $(f,g,h)\in\atp(Q)$ then $f$ and $g$ are determined by $h$.
\end{enumerate}
\end{cor}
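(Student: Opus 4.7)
The plan is to exploit the group structure of $\atp(Q)$ together with Lemma \ref{Lm:PrincipalAtp}. For part (i), I would take two autotopisms $(f,g,h)$ and $(f,g',h')$ sharing their first coordinate and form the componentwise product
\[
(f,g,h)^{-1}(f,g',h') \;=\; (\id_Q,\, g^{-1}g',\, h^{-1}h'),
\]
which lies in $\atp(Q)$ because $\atp(Q)$ is closed under inversion and composition. Applying Lemma \ref{Lm:PrincipalAtp}(i) to this autotopism with trivial first coordinate forces $g^{-1}g' = h^{-1}h' = R_x$ for some $x\in\nuc_r(Q)$. The hypothesis $\nuc_r(Q)=1$ then gives $x=1$, so $R_x=\id_Q$ and therefore $g=g'$, $h=h'$. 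This is exactly the uniqueness asserted in (i).

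Parts (ii) and (iii) are entirely analogous, invoking the other two parts of Lemma \ref{Lm:PrincipalAtp}. In (ii), the quotient autotopism has trivial middle coordinate and therefore equals $(L_x,\id_Q,L_x)$ with $x\in\nuc_\ell(Q)=1$. In (iii), the quotient autotopism has trivial last coordinate and equals $(R_x^{-1},L_x,\id_Q)$ with $x\in\nuc_m(Q)=1$. In each case the hypothesis kills $x$, forcing the remaining two coordinates of the two original autotopisms to coincide.

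There is no genuine obstacle; the corollary is a one-line consequence of Lemma \ref{Lm:PrincipalAtp} once one notices that the group structure of $\atp(Q)$ reduces the uniqueness question to a classification of autotopisms with one trivial coordinate. The only point worth flagging is that $\atp(Q)$ really is a group (not merely a monoid): this uses that the components of an autotopism are permutations of $Q$, so that componentwise inverses exist and again form an autotopism.
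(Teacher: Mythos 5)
Your proposal is correct and is essentially identical to the paper's proof: the paper also forms the quotient autotopism $(f,g,h)^{-1}(f,\overline{g},\overline{h})=(\id_Q,g^{-1}\overline{g},h^{-1}\overline{h})$, applies Lemma \ref{Lm:PrincipalAtp}, and uses triviality of the relevant nucleus, treating parts (ii) and (iii) as analogous.
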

\begin{proof}
Let us prove (i), the other parts being similar. Suppose that $\mathrm{Nuc}_r(Q)=1$ and $(f,g,h)$, $(f,\overline{g},\overline{h})\in\atp(Q)$. Then $(\id_Q,g^{-1}\overline{g},h^{-1}\overline{h}) = (f,g,h)^{-1}(f,\overline{g},\overline{h})\in\atp(Q)$. By Lemma \ref{Lm:PrincipalAtp}, $g^{-1}\overline{g} = h^{-1}\overline{h} = R_x$ for some $x\in\mathrm{Nuc}_r(Q)$. Since $\mathrm{Nuc}_r(Q) =1$, we have $R_x=R_1=\id_Q$, and $g=\overline{g}$, $h=\overline{h}$ follow.
\end{proof}

\subsection{Divisibility in power associative loops}\label{Ss:dDivisibility}

For an integer $d>1$ and a power associative loop $Q$, consider the mapping
\begin{equation}\label{Eq:dPower}
    h_d:Q\to Q,\quad x\mapsto x^d.
\end{equation}
In general, injectivity and surjectivity of $h_d$ are unrelated properties already in groups. (In the group of nonzero complex numbers under multiplication, $h_d$ is surjective but not injective. In the additive group of integers, $h_d$ is injective but not surjective.)

A power associative loop $Q$ is \emph{$d$-divisible} (resp. \emph{uniquely $d$-divisible}) if the mapping $h_d$ of \eqref{Eq:dPower} is surjective (resp. bijective).

\begin{lem}\label{Lm:dPowers}
Let $Q$ be a finite power associative loop, $d>1$ an integer and $h_d$ as in \eqref{Eq:dPower}. The following conditions are equivalent:
\begin{enumerate}
\item[(i)] $h_d$ is surjective on $Q$,
\item[(ii)] $h_d$ is injective on $Q$,
\item[(iii)] $Q$ contains no nonidentity element of order dividing $d$,
\item[(iv)] $Q$ contains no element of prime order dividing $d$.
\end{enumerate}
\end{lem}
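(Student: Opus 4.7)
The plan is to exploit the finiteness of $Q$ together with power associativity to reduce every claim to the cyclic subgroup $\langle x\rangle$ generated by a single element, which is a finite cyclic group. The equivalence of (i) and (ii) is immediate: $h_d$ maps the finite set $Q$ to itself, so surjectivity and injectivity are equivalent. I would then prove the chain (iii)$\Rightarrow$(i), (ii)$\Rightarrow$(iii), and (iii)$\Leftrightarrow$(iv) separately.

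For (ii)$\Rightarrow$(iii), the trivial step: if there were a nonidentity $x$ with $x^d=1$, then $h_d(x)=1=h_d(1)$ would contradict injectivity. For (iii)$\Leftrightarrow$(iv), note that any element of prime order $p\mid d$ is a nonidentity element of order dividing $d$, giving (iii)$\Rightarrow$(iv); conversely, if $x$ is a nonidentity element of order $n\mid d$, take any prime $p\mid n$; then $p\mid d$ and in the cyclic group $\langle x\rangle$ of order $n$ the element $x^{n/p}$ has order $p$, giving (iv)$\Rightarrow$(iii).

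The main step is (iii)$\Rightarrow$(i). Fix $x\in Q$ and consider $\langle x\rangle$, which by power associativity is a finite cyclic group of some order $n$. If some prime $p$ divided both $n$ and $d$, then $x^{n/p}\in\langle x\rangle$ would be a nonidentity element of order $p$ dividing $d$, contradicting (iii). Hence $\gcd(n,d)=1$, so the $d$-power map is a bijection on the cyclic group $\langle x\rangle$, and therefore $x=y^d$ for some $y\in\langle x\rangle\subseteq Q$. Since $x$ was arbitrary, $h_d$ is surjective on $Q$.

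There is no real obstacle here; the only subtle point is that one should not try to prove (iii)$\Rightarrow$(ii) directly, since for nonassociative loops a priori there could be $x,y$ in different cyclic subgroups with $x^d=y^d$. Routing through surjectivity (and using finiteness to get injectivity back) sidesteps this issue cleanly.
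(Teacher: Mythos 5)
Your proposal is correct and follows essentially the same route as the paper: finiteness gives (i)$\Leftrightarrow$(ii), the implication (ii)$\Rightarrow$(iii) is the same one-line contradiction, (iii)$\Leftrightarrow$(iv) is handled via a prime divisor of the order inside $\langle x\rangle$, and (iii)$\Rightarrow$(i) reduces to the coprimality of $|\langle x\rangle|$ and $d$ making the $d$-power map a bijection on that cyclic group. The paper's argument is identical in substance, so there is nothing to add.
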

\begin{proof}
Thanks to finiteness, (i) and (ii) are equivalent. If $Q$ contains an element $x\ne 1$ of order dividing $d$, then $h_d(x)=x^d = 1 = h_d(1)$ and $h_d$ is not injective. Hence (ii) implies (iii). Clearly, (iii) implies (iv). In fact, (iii) and (iv) are equivalent since if $1\ne x\in Q$ is such that $|x|$ divides $d$ and $p$ is a prime dividing $|x|$, then the cyclic group $\langle x\rangle$ contains an element of order $p$ (dividing $d$). Finally, suppose that (iii) holds, let $x\in Q$ and consider the cyclic group $C=\langle x\rangle$. Let $k=\mathrm{gcd}(|C|,d)$. If $k>1$ then $C$ contains a nonidentity element of order $k$ dividing $d$, a contradiction. Thus $\mathrm{gcd}(|C|,d)=1$ and $h_d$ restricts to a permutation of $C$. In particular, there is $y\in C$ such that $h_d(y)=x$, so $h_d$ is surjective on $Q$.\end{proof}

Given a prime $p$, we say that a finite power associative loop $Q$ has the \emph{Cauchy property for $p$} if whenever $p$ divides $|Q|$ then there is $x\in Q$ such that $|x|=p$. A finite power associative loop $Q$ is said to have the \emph{elementwise Lagrange property} if $|x|$ divides $|Q|$ for every $x\in Q$.

\begin{lem}\label{Lm:Coprime}
Let $Q$ be a finite power associative loop and let $d>1$.
\begin{enumerate}
\item[(i)] Suppose that $Q$ has the Cauchy property for every prime $p$ dividing $d$. If $Q$ is (uniquely) $d$-divisible then $|Q|$ is coprime to $d$.
\item[(ii)] Suppose that $Q$ has the elementwise Lagrange property. If $|Q|$ is coprime to $d$ then $Q$ is (uniquely) $d$-divisible.
\end{enumerate}
\end{lem}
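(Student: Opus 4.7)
The plan is to deduce both parts directly from Lemma \ref{Lm:dPowers}, which characterizes unique $d$-divisibility of a finite power associative loop by the absence of a nonidentity element of order dividing $d$. The role of the auxiliary hypotheses (the Cauchy property in (i), the elementwise Lagrange property in (ii)) is precisely to translate between element orders and divisors of $|Q|$, a connection that is automatic in groups but must be explicitly imposed in general power associative loops.

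For part (i) I would argue contrapositively. Assume $\gcd(|Q|,d)>1$ and let $p$ be a prime dividing both $|Q|$ and $d$. The Cauchy property for $p$ produces $x\in Q$ with $|x|=p$; working in the cyclic group $\langle x\rangle$ we get $x^d=1=1^d$ while $x\ne 1$, so $h_d$ fails to be injective. Hence $Q$ is not uniquely $d$-divisible.

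For part (ii), assume $\gcd(|Q|,d)=1$. By Lemma \ref{Lm:dPowers} it suffices to verify that $Q$ contains no nonidentity element whose order divides $d$. If $x\in Q$ satisfies $|x|\mid d$, then the elementwise Lagrange property yields $|x|\mid|Q|$, so $|x|\mid\gcd(|Q|,d)=1$, forcing $x=1$. Thus $h_d$ is bijective.

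I do not anticipate any substantive obstacle; the argument is essentially bookkeeping against Lemma \ref{Lm:dPowers}. The only subtlety worth flagging is that the two hypotheses are genuinely needed in the asserted directions: without Cauchy one cannot promote a common prime divisor of $|Q|$ and $d$ to an actual element of order $p$, and without elementwise Lagrange one cannot force the order of a potential $d$-torsion element to divide $|Q|$.
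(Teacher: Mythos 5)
Your proof is correct and follows essentially the same route as the paper: both parts reduce to Lemma \ref{Lm:dPowers}, with the Cauchy property supplying the element of order $p$ in (i) and the elementwise Lagrange property transferring $|x|\mid d$ to $|x|\mid |Q|$ in (ii). The only cosmetic difference is that you argue (ii) directly while the paper argues its contrapositive.
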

\begin{proof}
(i) Suppose that $Q$ has the Cauchy property for every prime dividing $d$, and also assume that $|Q|$ is not coprime to $d$. Let $p$ be any common prime divisor of $d$ and $|Q|$. By assumption, there is $x\in Q$ such that $|x|=p$. By Lemma \ref{Lm:dPowers}, $Q$ is not uniquely $d$-divisible.

(ii) Suppose that $Q$ has the elementwise Lagrange property, and also assume that $Q$ is not uniquely $d$-divisible. By Lemma \ref{Lm:dPowers}, there is a prime $p$ dividing $d$ and some $x\in Q$ such that $|x|=p$. By assumption, $|x|$ divides $|Q|$, which implies that $|Q|$ is not coprime to $d$.
\end{proof}

\subsection{Moufang loops}\label{Ss:Moufang}

A loop $Q$ is \emph{Moufang} if it satisfies any one of the equivalent \emph{Moufang identities}
\begin{equation}\label{Eq:m1}
    xy\cdot zx = (x\cdot yz)x,\
    xy\cdot zx = x(yz\cdot x),\
    x(y\cdot zy) = (xy\cdot z)y,\
    x(y\cdot xz) = (xy\cdot x)z.
\end{equation}
We start by summarizing several well-known results for Moufang loops.

By Moufang Theorem \cite{Moufang,DrapalMT}, if three elements $x$, $y$ and $z$ of a Moufang loop associate, that is, $x(yz)=(xy)z$, then the subloop $\langle x,y,z\rangle$ is a group. Consequently, Moufang loops are diassociative, power associative, satisfy the \emph{flexible law} $x(yx)=(xy)x$, the \emph{inverse properties} $x^{-1}(xy)=y=(yx)x^{-1}$, and so on.

The four nuclei of a Moufang loop $Q$ coincide and form a normal subloop of $Q$.

All inner mappings of a Moufang loop can be seen as pseudoautomorphisms, with suitable companions. In particular,
\begin{equation}\label{Eq:InnPseudo}
    (x^{-3},T_x)
%,\quad ([y,x],R_{x,y})\quad\text{and}\quad ([x^{-1},y^{-1}],L_{x,y})
\end{equation}
is an element of $\lps(Q)$ in a Moufang loop $Q$. Moreover, every pseudoautomorphism of a Moufang loop is a semiautomorphism.

We proceed to less familiar results on Moufang loops.

\begin{lem}
Let $Q$ be a Moufang loop. Then
\begin{equation}\label{Eq:m2}
    x\m (xy\cdot z) = yx\m \cdot xz\quad \text{and}\quad (z\cdot yx)x\m = zx\cdot x\m y
\end{equation}
for every $x,y,z\in Q$.
\end{lem}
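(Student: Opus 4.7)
The plan is to derive each identity by substituting into one of the Moufang identities from~\eqref{Eq:m1} and then exploiting diassociativity of Moufang loops: any subexpression all of whose entries lie in $\langle x,y\rangle$ can be reassociated freely, since that subloop is a group.

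For the first identity, I would substitute $y\mapsto yx\m$ into the fourth Moufang identity $x(y\cdot xz)=(xy\cdot x)z$, obtaining
\begin{displaymath}
    x\bigl((yx\m)\cdot xz\bigr)=\bigl(x(yx\m)\cdot x\bigr)z.
\end{displaymath}
Working inside the subgroup $\langle x,y\rangle$, the right-hand factor $x(yx\m)\cdot x$ associates to $xy$, so the right-hand side collapses to $xy\cdot z$. Applying the left inverse property then yields $yx\m\cdot xz=x\m(xy\cdot z)$.

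The second identity is obtained symmetrically from the third Moufang identity $x(y\cdot zy)=(xy\cdot z)y$, via the substitution $x\mapsto z$, $y\mapsto x$, $z\mapsto x\m y$. The inner factor $x\cdot(x\m y)x$ lies in $\langle x,y\rangle$ and collapses to $yx$ by diassociativity, so the identity reduces to $z\cdot yx=(zx\cdot x\m y)x$; applying the right inverse property gives $(z\cdot yx)x\m=zx\cdot x\m y$.

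I do not foresee any genuine obstacle: the argument is essentially bookkeeping. The only point that needs care is to verify that each subexpression being reassociated really does sit inside a two-generated subgroup, which is automatic here since $yx\m, x\m y\in\langle x,y\rangle$.
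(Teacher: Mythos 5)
Your proof is correct and is essentially the paper's own argument: the paper likewise writes $xy\cdot z = (x(yx\m)\cdot x)z = x(yx\m\cdot xz)$ using diassociativity and the fourth Moufang identity, then cancels $x$ on the left, and disposes of the second identity ``dually'' --- which is exactly your explicit derivation from the third (mirror) Moufang identity.
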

\begin{proof}
Note that $xy\cdot z = x(yx\m)x\cdot z = x(yx\m \cdot xz)$ by diassociativity and the Moufang identities \eqref{Eq:m1}. Multiplying by $x^{-1}$ on the left then yields the first identity. The second identity follows dually.
\end{proof}

\begin{lem}\label{Lm:m1}
Let $Q$ be a Moufang loop, $c\in Q$ and $f\in\sym(Q)$. Then $(c,f) \in \lps(Q)$ if and only if
\begin{displaymath}
    xc\m \cdot cy = f(f\m(x)f\m(y))
\end{displaymath}
for all $x,y \in Q$.
\end{lem}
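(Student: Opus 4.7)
The identity to be proved is just the pseudoautomorphism defining identity $cf(xy)=cf(x)\cdot f(y)$ rewritten after passing from $x,y$ to $f^{-1}(x),f^{-1}(y)$ and conjugating on the left by multiplication by $c^{-1}$; the Moufang machinery needed is precisely the first identity of \eqref{Eq:m2}, which for $x$ replaced by $c$ reads
\[
    yc\m\cdot cz = c\m(cy\cdot z) \qquad (y,z\in Q). \tag{$\ast$}
\]
Combined with the left inverse property $c\cdot c\m u=u$, which is available since Moufang loops have the inverse properties, this is the whole engine of the argument.

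For the forward direction, assume $(c,f)\in\lps(Q)$, i.e.\ $cf(u)\cdot f(v)=cf(uv)$ for all $u,v\in Q$. Given $x,y\in Q$, I would substitute $u=f\m(x)$ and $v=f\m(y)$ to obtain
\[
    cx\cdot y = cf(f\m(x)f\m(y)).
\]
Applying $(\ast)$ with $y=x$ and $z=y$ gives $xc\m\cdot cy = c\m(cx\cdot y)$, and then the left inverse property yields
\[
    xc\m\cdot cy = c\m\bigl(cf(f\m(x)f\m(y))\bigr) = f(f\m(x)f\m(y)),
\]
which is the desired identity.

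For the converse, assume that $xc\m\cdot cy=f(f\m(x)f\m(y))$ holds for all $x,y$. For given $u,v\in Q$ substitute $x=f(u)$, $y=f(v)$ to get
\[
    f(u)c\m\cdot cf(v) = f(uv).
\]
Another application of $(\ast)$ turns the left-hand side into $c\m(cf(u)\cdot f(v))$, and multiplying on the left by $c$ (using the left inverse property) yields $cf(uv)=cf(u)\cdot f(v)$, so $(c,f)\in\lps(Q)$.

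There is no real obstacle: the only thing to check carefully is that all the left-multiplications and left-divisions by $c$ used above are legitimate in a Moufang loop, which they are by diassociativity and the inverse properties guaranteed by the Moufang Theorem. The proof is therefore a two-line bidirectional computation once $(\ast)$ is isolated from \eqref{Eq:m2}.
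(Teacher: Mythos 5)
Your proof is correct and follows essentially the same route as the paper: both rest on the first identity of \eqref{Eq:m2} (your $(\ast)$) together with the left inverse property, and both conclude by substituting $f^{-1}(x)$, $f^{-1}(y)$. The paper merely compresses your two directions into a single chain of equivalences.
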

\begin{proof}
Indeed, $cf(x)\cdot f(y) = cf(xy)$ if and only if $f(xy)=c\m(cf(x)\cdot f(y)) = f(x)c\m \cdot cf(y)$, by \eqref{Eq:m2}. We are done upon substituting $f^{-1}(x)$ for $x$ and $f^{-1}(y)$ for $y$.
\end{proof}

\begin{prop}\label{Pr:m2}
Let $Q$ be a Moufang loop. Then
\begin{displaymath}
    xa^{-3} \cdot a^3y = T_a\m(T_a(x)T_a(y))
\end{displaymath}
for all $a,x,y \in Q$.
\end{prop}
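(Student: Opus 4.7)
The identity to prove is a specific instance of Lemma \ref{Lm:m1} applied to a carefully chosen element of $\lps(Q)$. The plan is to invert the pseudoautomorphism from \eqref{Eq:InnPseudo} and then feed the result into Lemma \ref{Lm:m1}.

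By \eqref{Eq:InnPseudo}, the pair $(a^{-3},T_a)$ lies in $\lps(Q)$, so its inverse is an element of $\lps(Q)$ as well. Using the inverse formula from \eqref{Eq:LPs}, I compute
\begin{displaymath}
    (a^{-3},T_a)^{-1} = \bigl(T_a^{-1}(a^{-3}\ldiv 1),\, T_a^{-1}\bigr) = \bigl(T_a^{-1}(a^{3}),\, T_a^{-1}\bigr).
\end{displaymath}
The next step is to simplify the companion $T_a^{-1}(a^3)$. Observe that $T_a(a) = R_a^{-1}L_a(a) = a^{-1}(a\cdot a) = a$ by diassociativity, so $T_a$ fixes $a$. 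Since every pseudoautomorphism of a Moufang loop is a semiautomorphism (as recalled in the preceding discussion), the remark about semiautomorphisms and powers gives $T_a(a^i) = T_a(a)^i = a^i$ for every $i\in\zbb$. In particular $T_a^{-1}(a^{3}) = a^{3}$, and therefore $(a^3, T_a^{-1}) \in \lps(Q)$.

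Now apply Lemma \ref{Lm:m1} with $c = a^3$ and $f = T_a^{-1}$. The lemma says that $(c,f)\in\lps(Q)$ is equivalent to $xc^{-1}\cdot cy = f(f^{-1}(x)f^{-1}(y))$ for all $x,y\in Q$. Substituting, this becomes
\begin{displaymath}
    xa^{-3}\cdot a^3 y \;=\; T_a^{-1}\bigl(T_a(x)\,T_a(y)\bigr),
\end{displaymath}
which is exactly the claimed identity.

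There is no real obstacle: the only mildly non-routine point is recognizing that $(a^{-3},T_a)^{-1}$ has the clean form $(a^3, T_a^{-1})$, which in turn relies on the observation that $T_a$ fixes $a$ together with the semiautomorphism property of pseudoautomorphisms in Moufang loops. Once this is in place, Lemma \ref{Lm:m1} delivers the proposition immediately.
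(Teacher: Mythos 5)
Your proof is correct and follows essentially the same route as the paper: both invert the pseudoautomorphism $(a^{-3},T_a)$ via \eqref{Eq:LPs} to get $(a^3,T_a^{-1})\in\lps(Q)$ and then apply Lemma \ref{Lm:m1}. The only difference is that you spell out why $T_a^{-1}(a^3)=a^3$ (via $T_a(a)=a$ and the semiautomorphism property), a step the paper leaves implicit.
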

\begin{proof}
We have $(a^{-3},T_a)\in\lps(Q)$ by \eqref{Eq:InnPseudo}. By \eqref{Eq:LPs}, $(a^{-3},T_a)^{-1} = (T_a^{-1}(a^3),T_a^{-1}) = (a^3,T_a^{-1})$. We are done by Lemma \ref{Lm:m1}.
\end{proof}

\subsection{Lagrange and Cauchy properties for Moufang loops}

Finally, we present a few results on $d$-divisible Moufang loops, taking advantage of Lemma \ref{Lm:Coprime}.

It is not difficult to show that finite Moufang loops have the elementwise Lagrange property:

\begin{lem}\label{Lm:ElementwiseLagrange}
Let $Q$ be a finite power associative loop  satisfying the right power alternative identity $(ab^i)b^j = ab^{i+j}$ for every $i,j\in\mathbb Z$. Then $|x|$ divides $|Q|$ for every $x\in Q$.
\end{lem}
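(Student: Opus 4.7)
The plan is to exploit the right power alternative identity to show that the right translation $R_b$ generates a cyclic subgroup of $\sym(Q)$ whose order equals $|b|$, and then use the orbit structure of this cyclic group on $Q$.

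First I would verify by a routine induction that the right power alternative identity $(ab^i)b^j = ab^{i+j}$ implies $R_b^n = R_{b^n}$ as permutations of $Q$ for every $n \ge 1$. From this one immediately gets that the order of $R_b$ in $\sym(Q)$ equals $|b|$: the identity $R_b^n = \id$ is equivalent to $R_{b^n} = \id$, which (evaluating at $1$) is equivalent to $b^n = 1$.

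Next, let $C = \langle R_b\rangle \le \sym(Q)$, a cyclic group of order $|b|$ acting on $Q$. The orbit of $a \in Q$ is $\{ab^i : 0 \le i < |b|\}$. The key observation is that this orbit has size exactly $|b|$: if $ab^i = ab^j$ for indices in the range $0 \le i,j < |b|$, then applying $L_a^{-1}$ (which exists since $Q$ is a loop) gives $b^i = b^j$, whence $i = j$ by the definition of $|b|$. Since every orbit of $C$ on $Q$ has the same size $|b|$ and the orbits partition $Q$, we conclude that $|b|$ divides $|Q|$.

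The main (minor) obstacle is the bookkeeping in the first step, i.e., carefully deducing $R_b^n = R_{b^n}$ from the hypothesis; everything after that is a one-line orbit-counting argument whose crucial ingredient is nothing more than left cancellation in the loop.
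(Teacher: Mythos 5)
Your proposal is correct and takes essentially the same approach as the paper: the orbits of $\langle R_b\rangle$ are exactly the right cosets $a\langle b\rangle$, which the paper shows directly to partition $Q$, and both arguments hinge on the same application of the right power alternative law (your $R_b^n=R_{b^n}$ versus the paper's $(bx^j)x^{-i}=bx^{j-i}$). Your packaging via the group action makes the partition into blocks automatic and makes explicit the left-cancellation step showing each block has size exactly $|b|$, which the paper leaves tacit.
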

\begin{proof}
Let $x\in Q$ and $X=\langle x\rangle$. It suffices to show that the right cosets of $X$ partition $Q$. Suppose that $aX\cap bX\ne\emptyset$. Then $ax^i=bx^j$ for some $i,j\in\mathbb Z$, therefore $a = (bx^j)x^{-i} = bx^{j-i}$ by the right power alternative law, and thus $aX = (bx^{j-i})X = \{(bx^{j-i})x^k:k\in\mathbb Z\} = \{bx^{j-i+k}:k\in\mathbb Z\} = bX$.
\end{proof}

In general, Moufang loops do not satisfy the Cauchy property for every prime $p$. For instance, the smallest nonassociative simple Moufang loop of order $120$ contains no element of order $5$ \cite{Paige} and therefore it violates the Cauchy property for $p=5$. But the Cauchy property holds in Moufang loops for the primes $p=2$ and $p=3$:

\begin{thm}\label{Th:Cauchy}
Ever finite Moufang loop satisfies the Cauchy property for $p=2$ and $p=3$.
\end{thm}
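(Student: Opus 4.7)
The plan is to prove the two primes separately, as each relies on rather different ideas.

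For $p=2$, I would use the classical involution argument, exploiting the fact that Moufang loops are diassociative and therefore have unique two-sided inverses. Define $\iota:Q\to Q$ by $\iota(x)=x^{-1}$; by the inverse property $\iota$ is an involution of $Q$. The elements of $Q\setminus\{x:x^2=1\}$ split into $\iota$-orbits of size exactly two, so the cardinality of the set $E=\{x\in Q:x^2=1\}$ has the same parity as $|Q|$. Since $1\in E$ always, if $2$ divides $|Q|$ then $|E|\ge 2$, producing an element of order $2$.

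For $p=3$ the involution trick is unavailable, so the plan is to proceed by contradiction using the $d$-divisibility machinery already developed: assume $3\mid|Q|$ but no element of $Q$ has order $3$. By Lemma \ref{Lm:dPowers} the cubing map $h_3$ is then a bijection on $Q$, i.e., every element of $Q$ is uniquely a cube. The hope is to combine this bijectivity with the Moufang identity from Proposition \ref{Pr:m2},
\begin{displaymath}
    xa^{-3}\cdot a^3y=T_a^{-1}(T_a(x)T_a(y)),
\end{displaymath}
which effectively says that writing an arbitrary element $b$ of $Q$ as $a^3$ lets one ``conjugate'' multiplication through $T_a$. Since under our hypothesis every $b$ is of this form, this identity becomes a global structural constraint on $Q$ that one can leverage against a finiteness/counting argument to force $3\nmid|Q|$, contradicting the assumption.

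An alternative (and arguably cleaner) approach is to imitate the $p$-tuple proof of Cauchy's theorem: count triples $(x,y,z)\in Q^3$ satisfying some distinguished product relation equal to $1$, and exhibit a free action of $\mathbb{Z}/3\mathbb{Z}$ by cyclic shift whose fixed points are precisely triples $(x,x,x)$ with $x^3=1$. The main obstacle here, and what makes $p=3$ genuinely harder than $p=2$, is that Moufang loops are only diassociative rather than associative, so one must choose the bracketing of the product carefully and use the Moufang identities \eqref{Eq:m1} to verify that the cyclic shift really preserves the chosen relation. Once that is arranged, the standard orbit-counting modulo $3$ forces more than one fixed point, hence an element of order $3$. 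Because this step is the most delicate and the only one requiring genuine Moufang (rather than group-theoretic) ingenuity, I would defer its detailed verification to the appendix, as is done here in Section \ref{Sc:Cauchy3}.
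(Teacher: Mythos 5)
Your $p=2$ argument is exactly the paper's: the inversion map is an involution fixing $1$, so an even-order power associative loop has another fixed point, i.e.\ an element of order $2$. Nothing to add there.

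For $p=3$ you propose two routes. The first one (assume no element of order $3$, so $h_3$ is bijective by Lemma \ref{Lm:dPowers}, then ``leverage'' Proposition \ref{Pr:m2} against an unspecified counting argument) is not a proof: you never say what is counted or where the contradiction arises, and I see no way to complete it as stated. Your second route, the McKay-style count of triples, is the substantive one, and it is genuinely different from the paper's. As written it has a gap, because the one step you defer --- that the cyclic shift preserves a suitably bracketed product relation --- is the entire content of the argument, and you defer it to an appendix that does not exist in your write-up. However, the gap closes in two lines. Set $S=\{(x,y,z)\in Q^3:(xy)z=1\}$. If $(xy)z=1$ then $z=(xy)\backslash 1=(xy)^{-1}=y^{-1}x^{-1}$ (diassociativity gives the antiautomorphic inverse property), hence $yz=y(y^{-1}x^{-1})=x^{-1}$ and $(yz)x=1$, all computed inside the group $\langle x,y\rangle$. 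So $S$ is invariant under $(x,y,z)\mapsto(y,z,x)$, $|S|=|Q|^2$, the fixed triples are exactly the $(x,x,x)$ with $x^3=x^2\cdot x=1$, and when $3$ divides $|Q|$ the orbit count forces at least three such $x$, hence one of order $3$. (This also explains why the method cannot reach $p=5$: for quintuples the bracketed product is no longer independent of the bracketing and shift-invariance fails, consistent with the order-$120$ Paige loop having no element of order $5$.)

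Once completed, your argument is considerably more elementary than the paper's Section \ref{Sc:Cauchy3}, which builds a triality map $\sigma$ on $\mlt(Q)$ for Moufang loops with trivial nucleus (Lemma \ref{Lm:Extend}), works inside a Sylow $3$-subgroup of the semidirect product $\mlt(Q)\ltimes\langle\sigma\rangle$ to extract a proper subloop of order divisible by $3$ (Proposition \ref{Pr:Proper}), and then inducts on $|Q|$ through the nucleus. Your counting argument needs only diassociativity and the inverse properties, and so applies to any finite diassociative (indeed IP power associative) loop; the paper's route is heavier but develops machinery it reuses. The verdict: correct in outline and an improvement on the paper's proof, but you must actually carry out the shift-invariance verification rather than postpone it, and you should drop the first, non-working sketch.
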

\begin{proof}
For $p=2$, the standard group-theoretic argument works. Let $Q$ be a power associative loop of even order. The mapping $J:Q\to Q$, $x\mapsto x^{-1}$ is an involution and therefore has only orbits of sizes $1$ and $2$. Since $|Q|$ is even and $J(1)=1$, there must be $1\ne x\in Q$ such that $J(x)=x$, that is, $|x|=2$.

For $p=3$, consider the set $S=\{(x,y,z)\in Q\times Q\times Q:xy\cdot z=1\}$. For every $x,y\in Q$ there is a unique $z\in Q$ (namely $z=(xy)^{-1}$) such that $xy\cdot z = 1$. Hence $|S|=|Q|^2$. Moreover, if $xy\cdot z=1$ then $z\cdot xy=1$ and, by Moufang Theorem, $zx\cdot y=1$. Therefore $S$ is invariant under permuting its coordinates by $f=(1,2,3)$. Every orbit of $\langle f\rangle$ on $S$ has size $1$ or $3$, and $(x,y,z)\in S$ has orbit of size $1$ if and only if $x=y=z$. Since $3$ divides $|S|=|Q|^2$ and $(1,1,1)\in S$, there is $1\ne x\in Q$ such that $(x,x,x)=1$, which implies $|x|=3$.
\end{proof}

\begin{remark}
The elementary combinatorial proof of the Cauchy property for $p=3$ in Moufang loops is due to Doro (see \cite[Lemma 4]{Doro}). It is a variation on the well-known proof of the Cauchy property in groups for any prime $p$. In Section \ref{Sc:Cauchy3} we give another elementary but longer proof that is based on results of independent interest, cf.~Lemma \ref{Lm:Extend}.

Both Lemma \ref{Lm:ElementwiseLagrange} and Theorem \ref{Th:Cauchy} also follow from results in the theory of Moufang loops whose only known proofs depend on the classification of finite simple Moufang loops \cite{Liebeck} and hence also on the classification of finite simple groups. Lemma \ref{Lm:ElementwiseLagrange} is an immediate consequence of the Lagrange Theorem for Moufang loops, cf.~\cite{GagHalLagrange,GriZavLagrange}. For the Cauchy property, Grishkov and Zavarnitsine proved in \cite{GriZavSylow} that every Moufang loop of order $2^a3^bm$ with $m$ coprime to $6$ contains (Sylow) subloops of orders $2^a$ and $3^b$. Therefore, if $Q$ is a Moufang loop whose order is divisible by $p\in\{2,3\}$, it contains a subloop $X$ of order $p^c$ for some $c>0$, then any element $1\ne x\in X$ generates a cyclic group $\langle x\rangle$ of $p$-power order by Lemma \ref{Lm:ElementwiseLagrange}, and the group $\langle x\rangle$ then certainly contains an element of order $p$.
\end{remark}

Combining Lemma \ref{Lm:Coprime}, Lemma \ref{Lm:ElementwiseLagrange} and Theorem \ref{Th:Cauchy}, we get:

\begin{prop}\label{Pr:CoprimeUD}
Let $Q$ be a finite Moufang loop and let $d = 2^a3^b > 1$. Then $Q$ is uniquely $d$-divisible if and only if $|Q|$ is coprime to $d$.
\end{prop}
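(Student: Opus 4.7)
The plan is to simply assemble the three ingredients the section has been accumulating. Write $d = 2^{a}3^{b} > 1$, so every prime divisor of $d$ lies in $\{2,3\}$.

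For the forward direction, assume $Q$ is uniquely $d$-divisible. I want to apply Lemma \ref{Lm:Coprime}(i), so I need the Cauchy property of $Q$ at each prime dividing $d$. Since the only candidate primes are $2$ and $3$, Theorem \ref{Th:Cauchy} supplies exactly what is needed, and the lemma then gives $\gcd(|Q|,d) = 1$.

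For the reverse direction, assume $|Q|$ is coprime to $d$. To apply Lemma \ref{Lm:Coprime}(ii) I need the elementwise Lagrange property. This is Lemma \ref{Lm:ElementwiseLagrange}, whose hypothesis is the right power alternative identity $(ab^{i})b^{j} = ab^{i+j}$; in a Moufang loop this is an immediate consequence of diassociativity, since $a$, $b^{i}$ and $b^{j}$ all lie in the (associative) subgroup $\langle a,b\rangle$. Thus Lemma \ref{Lm:Coprime}(ii) yields that $Q$ is uniquely $d$-divisible.

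Since both implications are assembled from results already proved in this section, there is no real obstacle; the only thing to remark on is that the restriction $d = 2^{a}3^{b}$ is exactly what makes the Cauchy hypothesis of Lemma \ref{Lm:Coprime}(i) available without invoking the classification of finite simple Moufang loops, keeping the proof elementary in the sense emphasized earlier in the paper.
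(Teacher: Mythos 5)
Your proof is correct and is exactly the paper's argument: the paper proves this proposition by the one-line remark ``Combining Lemma \ref{Lm:Coprime}, Lemma \ref{Lm:ElementwiseLagrange} and Theorem \ref{Th:Cauchy}, we get,'' and you have simply spelled out which ingredient handles which direction (including the correct observation that the right power alternative hypothesis of Lemma \ref{Lm:ElementwiseLagrange} follows from diassociativity of Moufang loops).
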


\section{Centrality, nilpotency, abelianess and solvability for loops}\label{Sc:Solvability}

In \cite{FM}, Freese and McKenzie developed commutator theory for congruence modular varieties. Their commutator of two congruences $\alpha$, $\beta$ in an algebra $Q$ will be denoted by $[\alpha,\beta]_Q$. The smallest congruence on $Q$ will be denoted by $\bot_Q = \{(x,x):x\in Q\}$ and the largest congruence on $Q$ will be denoted by $\top_Q = \{(x,y):x,y\in Q\}$.

The commutator theory of \cite{FM} was specialized to the variety of loops in \cite{StaVojComm}. Although we will not need to work with the exact form of the commutator of loop congruences (and instead take advantage of Theorem \ref{Th:AbExt} below), we state it here for the sake of completeness. In \cite[Theorem 2.1]{StaVojComm}, the commutator of loop congruences was expressed as the congruence generated by certain pairs of evaluated \emph{total} inner mappings. The technical complication with total inner mappings has been recently removed by Barnes who obtained the following description of the commutator of loop congruences in her PhD thesis \cite{Barnes}:

\begin{thm}[Barnes]\label{Th:CommCongr}
Let $\alpha$, $\beta$ be congruences of a loop $Q$. Then the commutator $[\alpha,\beta]_Q$ is the congruence of $Q$ generated by all pairs
\begin{displaymath}
    (T_{u_1}(a),T_{v_1}(a)),\quad (L_{u_1,u_2}(a),L_{v_1,v_2}(a)),\quad (R_{u_1,u_2}(a),R_{v_1,v_2}(a)),
\end{displaymath}
where $(1,a)\in \alpha$ and $(u_1,v_1)$, $(u_2,v_2)\in\beta$.
\end{thm}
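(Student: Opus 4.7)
The plan is to build on \cite[Theorem 2.1]{StaVojComm}, which describes $[\alpha,\beta]_Q$ as the congruence of $Q$ generated by pairs $(\varphi(a),\psi(a))$ in which $\varphi$ and $\psi$ are ``total'' inner mappings whose parameter tuples are coordinate-wise $\beta$-related, and $(1,a)\in\alpha$. The task is to cut this pool of generators down to pairs involving only the three basic kinds of inner mapping $T_u$, $L_{u_1,u_2}$, and $R_{u_1,u_2}$.

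Let $\Theta$ denote the congruence of $Q$ generated by the three families of pairs listed in the statement. The inclusion $\Theta\subseteq[\alpha,\beta]_Q$ is immediate: every such pair is a generator from \cite[Theorem 2.1]{StaVojComm} with a one-step inner mapping, hence lies in $[\alpha,\beta]_Q$, so $\Theta$ is contained in $[\alpha,\beta]_Q$.

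For the reverse inclusion, the key structural fact is that $\inn(Q)=\langle T_u,L_{u,v},R_{u,v}:u,v\in Q\rangle$. Consequently, every total inner mapping $\varphi$ admits a factorization $\varphi=\varphi_1\cdots\varphi_n$ with each $\varphi_i\in\{T_u,L_{u,v},R_{u,v}\}$, and any companion total inner mapping $\psi$ whose parameters are coordinate-wise $\beta$-related to those of $\varphi$ admits a matched factorization $\psi=\psi_1\cdots\psi_n$ in which $\varphi_i$ and $\psi_i$ are of the same basic type and their defining parameters are $\beta$-related. The first non-trivial lemma to establish is precisely the existence of such matched factorizations.

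With matched factorizations in hand, the conclusion follows by telescoping. Set $b_k=\varphi_{k+1}\cdots\varphi_n(a)$; since inner mappings fix $1$ and $\alpha$ is preserved by the loop operations, $(1,b_k)\in\alpha$, and hence the pair $(\varphi_k(b_k),\psi_k(b_k))$ is a generator of $\Theta$. Applying the term operation $\psi_1\cdots\psi_{k-1}$ to both coordinates (which preserves $\Theta$ since $\Theta$ is a congruence) yields
\begin{displaymath}
(\psi_1\cdots\psi_{k-1}\varphi_k\cdots\varphi_n(a),\ \psi_1\cdots\psi_k\varphi_{k+1}\cdots\varphi_n(a))\in\Theta.
\end{displaymath}
Chaining these from $k=1$ to $k=n$ via transitivity produces $(\varphi(a),\psi(a))\in\Theta$. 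The main obstacle is the matched-factorization lemma: the generators in \cite[Theorem 2.1]{StaVojComm} are specified implicitly through term operations, so one must convert a given description of $\varphi$, together with the $\beta$-substitution producing $\psi$, into a basic-factor composition whose parameter lists respect the $\beta$-matching position by position. Once this bookkeeping step is in place, the telescoping argument is routine.
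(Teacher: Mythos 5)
The paper does not prove this statement at all: it is quoted verbatim from Barnes's thesis \cite{Barnes}, so your proposal can only be measured against the cited source. As an attempted proof it has a genuine gap at its central step. The generators in \cite[Theorem 2.1]{StaVojComm} are evaluated \emph{total} inner mappings, i.e.\ elements of the stabilizer of $1$ in the total multiplication group, which is generated by the left and right translations \emph{together with the division (middle) translations}. A total inner mapping therefore need not belong to $\inn(Q)=\langle T_u,L_{u,v},R_{u,v}\rangle$ at all, so the ``matched factorization'' $\varphi=\varphi_1\cdots\varphi_n$ with each $\varphi_i$ of one of the three basic types simply does not exist in general. Eliminating the division-based generators --- showing that the pairs they produce already lie in the congruence generated by the three basic families --- is precisely the ``technical complication with total inner mappings'' that the paper credits Barnes with removing; your proposal assumes this reduction rather than proving it, which is to say it assumes the theorem's essential content.

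The remaining ingredients are fine but routine: the inclusion $\Theta\subseteq[\alpha,\beta]_Q$ is correct, and the telescoping argument (inserting $\psi_1\cdots\psi_{k-1}\varphi_k\cdots\varphi_n(a)$ as intermediate terms, using that congruences are preserved by unary polynomial operations and that inner mappings fix $1$, so $(1,b_k)\in\alpha$) is the standard way to pass from single generators to compositions. A second, smaller issue worth flagging even in the favorable case: the generating pairs are indexed by \emph{words} with parameter tuples, not by abstract permutations, so the matched factorization must be carried out at the level of words (choosing the same word for $\varphi$ and $\psi$ and substituting $\beta$-related parameters position by position); your sketch gestures at this bookkeeping but the real obstruction is the one above.
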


In loops, there is a one-to-one correspondence between congruences and normal subloops. Given a normal subloop $X$ of a loop $Q$, the \emph{congruence $\alpha_X$ induced by $X$} is the equivalence relation on $Q$ with equivalence classes $\{aX:a\in Q\}$. Conversely, given a congruence $\alpha$ of a loop $Q$, the \emph{normal subloop of $Q$ induced by $\alpha$} is the equivalence class of $\alpha$ containing $1$.

We will therefore write $[X,Y]_Q$ for the \emph{commutator of normal subloops $X$, $Y$ of $Q$}, by which we mean the normal subloop of $Q$ induced by the congruence $[\alpha_X,\alpha_Y]_Q$. Theorem \ref{Th:CommCongr} can then be routinely translated to the context of normal subloops:

\begin{thm}
Let $X$, $Y$ be normal subloops of a loop $Q$. Then the commutator $[X,Y]_Q$ is the normal subloop of $Q$ generated by all quotients
\begin{displaymath}
    T_{u_1}(a)/T_{v_1}(a),\quad L_{u_1,u_2}(a)/L_{v_1,v_2}(a),\quad R_{u_1,u_2}(a)/R_{v_1,v_2}(a),
\end{displaymath}
where $a\in X$ and $u_1/v_1$, $u_2/v_2\in Y$.
\end{thm}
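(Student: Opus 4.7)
The plan is to transport Theorem \ref{Th:CommCongr} through the lattice isomorphism between normal subloops of $Q$ and congruences of $Q$ that is recalled in the paragraph preceding the statement. I would isolate three standard facts at the outset: (a) the assignment $X \mapsto \alpha_X$ is a complete lattice isomorphism whose inverse sends a congruence $\alpha$ to its $1$-class; (b) because the $\alpha_X$-classes are the cosets $aX$, one has $(s,t)\in\alpha_X$ if and only if $s/t\in X$; and (c) because the isomorphism preserves arbitrary joins, the congruence on $Q$ generated by a family of pairs $\{(s_i,t_i)\}_{i\in I}$ corresponds under it to the normal subloop of $Q$ generated by the family of quotients $\{s_i/t_i\}_{i\in I}$.

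Granted these facts, the proof is a direct translation. By definition, $[X,Y]_Q$ is the normal subloop of $Q$ induced by $[\alpha_X,\alpha_Y]_Q$. Theorem \ref{Th:CommCongr} expresses $[\alpha_X,\alpha_Y]_Q$ as the congruence generated by the three families of pairs listed there, indexed over $a$ with $(1,a)\in\alpha_X$ and over $u_1,v_1,u_2,v_2$ with $(u_1,v_1),(u_2,v_2)\in\alpha_Y$. Fact (b) applied to $X$ rewrites the condition $(1,a)\in\alpha_X$ as $a\in X$ (the $1$-class of $\alpha_X$ is precisely $X$), and applied to $Y$ rewrites $(u_i,v_i)\in\alpha_Y$ as $u_i/v_i\in Y$. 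Fact (c) then converts the description of $[\alpha_X,\alpha_Y]_Q$ as a congruence generated by pairs into the desired description of $[X,Y]_Q$ as the normal subloop of $Q$ generated by quotients.

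The only nonroutine ingredient is fact (c), and it is not really an obstacle: it follows from the isomorphism preserving joins together with the observation that the principal congruence generated by a single pair $(s,t)$ has $1$-class equal to the principal normal subloop generated by $s/t$, which is itself immediate from (b). All substantive content already resides in Theorem \ref{Th:CommCongr}; the present statement is purely a change of language from congruences to normal subloops, and I would write it up as such rather than re-proving anything from scratch.
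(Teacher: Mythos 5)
Your proposal is correct and is exactly the ``routine translation'' the paper has in mind: the paper offers no separate proof of this statement, merely asserting that Theorem \ref{Th:CommCongr} translates to normal subloops via the standard congruence--normal-subloop correspondence, which is precisely what your facts (a)--(c) carry out.
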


The commutator theory of \cite{FM} gives rise naturally to theories of central nilpotency and solvability. In loops, the central nilpotency theory of \cite{FM} coincides with the classical nilpotency theory adopted from groups. But the solvability theory of \cite{FM} is strictly stronger in loops than the classical solvability theory adopted from groups. Here are more details:

\subsection{Centrality and central nilpotency}

A congruence $\alpha$ of an algebra $Q$ is \emph{central} if $[\alpha,\top_Q]_Q=\bot_Q$. Passing to normal subloops, a normal subloop $X$ of a loop $Q$ is then said to be \emph{central} if $[X,Q]_Q=1$. Fortunately, this agrees with the traditional definition of centrality, because a normal subloop $X$ of $Q$ satisfies $[X,Q]_Q=1$ if and only if $X\le Z(Q)$.

\begin{definition}\label{Df:CenExt}
Given a commutative group $(X,+,0)$, a loop $(F,\cdot,1)$ and a mapping $\theta:F\times F\to X$ satisfying $\theta_{1,r} = 0 = \theta_{r,1}$ for every $r\in F$, the loop defined on $F\times X$ by
\begin{displaymath}
    (r,x)(s,y) = (rs,\ x+y+\theta_{r,s})
\end{displaymath}
is a \emph{central extension of $X$ by $F$}.
\end{definition}

\begin{thm}[{\cite[Theorem 4.2]{StaVojAbel}}]\label{Th:CenExt}
Let $X$ be a normal subloop of a loop $Q$. Then $X$ is central in $Q$ (that is, $[X,Q]_Q=1$) if and only if $Q$ is isomorphic to a central extension of $X$ by $Q/X$.
\end{thm}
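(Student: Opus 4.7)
The plan is to prove the two implications separately. The paper already records (in the paragraph preceding Definition \ref{Df:CenExt}) that for a normal subloop, $[X,Q]_Q=1$ is equivalent to $X\le Z(Q)$, so throughout we may freely use that $X$ is a central subloop in the traditional sense: $X$ is a commutative group, $X\subseteq\nuc(Q)$, and every $x\in X$ commutes with every element of $Q$.

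For the forward direction, assume $X\le Z(Q)$ and set $F=Q/X$. Choose a set-theoretic transversal $s:F\to Q$ satisfying $s(1)=1$; since every coset has the form $s(r)X=Xs(r)$, the map $\Phi:F\times X\to Q$, $(r,x)\mapsto s(r)x$, is a bijection. Define $\theta:F\times F\to X$ by the equation $s(r)s(s)=s(rs)\,\theta_{r,s}$, which makes sense because $s(r)s(s)$ lies in the coset $s(rs)X$. Because $s(1)=1$, the conditions $\theta_{1,r}=0=\theta_{r,1}$ are immediate. It remains to compute $\Phi(r,x)\Phi(s,y)=(s(r)x)(s(s)y)$ and to match it to $\Phi(rs,\,x+y+\theta_{r,s})$. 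Using that $x,y\in\nuc(Q)$ and that $x$ commutes with $s(s)$, one moves $x$ past $s(s)$ and reassociates to obtain $s(r)s(s)\cdot(x+y)=s(rs)\,\theta_{r,s}(x+y)=s(rs)\,(x+y+\theta_{r,s})$, which is exactly $\Phi(rs,\,x+y+\theta_{r,s})$. Thus $\Phi$ is an isomorphism from the central extension determined by $(X,F,\theta)$ to $Q$.

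For the converse, assume $Q$ is (isomorphic to) the central extension of Definition \ref{Df:CenExt} and identify $X$ with $\{(1,x):x\in X\}\subseteq Q$. Using $\theta_{1,r}=0=\theta_{r,1}$, the defining formula gives $(1,x)(r,y)=(r,x+y)=(r,y)(1,x)$, so elements of $X$ commute with all of $Q$. Nuclearity is checked directly: expanding $((a,u)(b,v))(1,x)$, $(a,u)((b,v)(1,x))$, and the analogous middle- and left-nucleus expressions, the cocycles that appear are all of the form $\theta_{a,b}$ (the trivial values $\theta_{\cdot,1}$ and $\theta_{1,\cdot}$ drop out), so the three products agree. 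Hence $(1,x)\in Z(Q)$, and $X\le Z(Q)$, i.e., $[X,Q]_Q=1$. One should also note that $X$ is normal in $Q$: the quotient map $(r,x)\mapsto r$ is a well-defined homomorphism onto $F$ with kernel $X$, which is the cleanest way to see normality.

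The main obstacle is the associativity/commutativity manipulation in the forward direction. The element $x$ must be moved from between $s(r)$ and $s(s)$ to the right of $s(s)$, and one must then reassociate the product of three factors $s(r),s(s),(x+y)$; each of these steps rests on a different piece of the hypothesis $X\le Z(Q)$ (commutativity of $x$ with $s(s)$, and $x,y\in\nuc(Q)$). Once this chain of rewrites is carried out carefully, the cocycle identity drops out and both directions follow without further subtlety.
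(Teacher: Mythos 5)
Your argument is correct. Note that the paper itself gives no proof of Theorem \ref{Th:CenExt} --- it is imported verbatim from \cite[Theorem 4.2]{StaVojAbel} --- so there is nothing internal to compare against; what you have written is the standard transversal-and-cocycle argument, and it legitimately leans on the equivalence $[X,Q]_Q=1 \Leftrightarrow X\le Z(Q)$ that the paper explicitly records in the paragraph preceding Definition \ref{Df:CenExt} (that equivalence, not the extension bookkeeping, is where the commutator-theoretic content lives, and it is the part you are entitled to take as given here). Both directions of your reduction --- the forward computation $(s(r)x)(s(s)y)=s(rs)(x+y+\theta_{r,s})$ via nuclearity and commutation, and the converse verification that $1\times X$ is central in the extension --- check out.
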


A loop $Q$ is an \emph{iterated central extension} if it is either an abelian group or there exists a central subloop $X$ of $Q$ such that $Q/X$ is an iterated central extension.

\begin{definition}
A \emph{central series} for a loop $Q$ is a series
\begin{displaymath}
    Q=Q_0\ge Q_1\ge\cdots\ge Q_n=1,
\end{displaymath}
such that for every $0\le i<n$, $Q_{i+1}$ is a normal subloop of $Q$ and the factor $Q_i/Q_{i+1}$ is central in $Q/Q_{i+1}$ (that is, $[Q_i/Q_{i+1},Q/Q_{i+1}]_{Q/Q_{i+1}}=1$, or, equivalently, $Q_i/Q_{i+1}\le Z(Q/Q_{i+1}$)). A loop $Q$ is \emph{(centrally) nilpotent} if it contains a central series.
\end{definition}

\begin{thm}[{\cite[Corollary 5.2]{StaVojAbel}}]
A loop is centrally nilpotent if and only if it is an iterated central extension.
\end{thm}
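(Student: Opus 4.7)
The plan is to prove both directions by induction on the depth of the relevant structure, leveraging Theorem \ref{Th:CenExt} to convert between the notions of central subloops and central extensions.

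For the forward direction, suppose $Q$ is centrally nilpotent, so there is a central series $Q=Q_0\ge Q_1\ge\cdots\ge Q_n=1$. I would induct on $n$. If $n=0$ then $Q=1$, which is an abelian group (and hence trivially an iterated central extension); if $n=1$ then $Q$ itself is central in $Q$, so $Q\le Z(Q)$ is an abelian group. For the inductive step, take the bottom term $Q_{n-1}$, which by definition of central series satisfies $Q_{n-1}=Q_{n-1}/Q_n \le Z(Q/Q_n)=Z(Q)$, so $Q_{n-1}$ is a central subloop of $Q$. The quotient $Q/Q_{n-1}$ inherits the central series $Q_0/Q_{n-1}\ge\cdots\ge Q_{n-1}/Q_{n-1}=1$ of length $n-1$ (centrality of each factor passes to the quotient by the correspondence theorem for congruences, as central quotients of $Q/Q_{i+1}$ correspond to central quotients of $(Q/Q_{i+1})/(Q_{n-1}/Q_{i+1})\cong Q/Q_{n-1}$ modulo $Q_i/Q_{n-1}$ etc.). By the induction hypothesis $Q/Q_{n-1}$ is an iterated central extension, and since $Q_{n-1}$ is a central subloop of $Q$ with $Q/Q_{n-1}$ an iterated central extension, $Q$ itself is an iterated central extension by definition.

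For the converse, suppose $Q$ is an iterated central extension. I would again induct, this time on the depth $n$ of the defining recursion (the least number of central quotients needed to reach an abelian group). If $n=0$, then $Q$ is an abelian group and $Q\ge 1$ is a central series. Otherwise there is a central subloop $X\le Z(Q)$ such that $\overline{Q}=Q/X$ is an iterated central extension of strictly smaller depth. By induction $\overline Q$ has a central series $\overline Q=\overline Q_0\ge\overline Q_1\ge\cdots\ge\overline Q_{n-1}=1$. Let $Q_i$ be the preimage of $\overline Q_i$ in $Q$ for $0\le i\le n-1$, and set $Q_n=1$. Then $Q_{n-1}=X\le Z(Q)$, so $Q_{n-1}/Q_n$ is central in $Q/Q_n=Q$; and for $i<n-1$ the factor $Q_i/Q_{i+1}\cong \overline Q_i/\overline Q_{i+1}$ is central in $Q/Q_{i+1}\cong \overline Q/\overline Q_{i+1}$ by another appeal to the correspondence theorem. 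Hence $Q=Q_0\ge Q_1\ge\cdots\ge Q_n=1$ is a central series and $Q$ is centrally nilpotent.

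The main obstacle is the bookkeeping with the correspondence theorem: one needs to know that commutators of congruences behave well with respect to quotients, in particular that $[\alpha,\beta]_Q$ projects to $[\alpha/\gamma,\beta/\gamma]_{Q/\gamma}$ when $\gamma\le\alpha\cap\beta$. This is standard in the commutator theory of \cite{FM} and can be invoked directly; alternatively, one can use the translated statement of Theorem \ref{Th:CenExt} to identify centrality of $X$ in $Q$ with $Q$ being a central extension of $X$ by $Q/X$, which is exactly the base clause of the recursive definition of an iterated central extension. With that identification, the two inductions are essentially the same statement unwound in opposite directions.
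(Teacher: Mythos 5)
Your proof is correct; note that the paper itself states this theorem only as a citation of \cite[Corollary 5.2]{StaVojAbel} without reproving it, but it does give a full proof of the exactly analogous Proposition \ref{Pr:Solv} for congruence solvability, and your two inductions (peeling off the bottom central term via Theorem \ref{Th:CenExt}, and pulling back a central series through the quotient via the correspondence and third isomorphism theorems) are precisely that argument with ``abelian in'' replaced by ``central in.'' So this is essentially the same approach the paper uses for its own version of the statement, and no gap remains.
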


\subsection{Abelianess}\label{Ss:Abelianess}

A congruence $\alpha$ of an algebra $Q$ is \emph{abelian} if $[\alpha,\alpha]_Q=\bot_Q$. An algebra $Q$ is \emph{abelian} if the congruence $\top_Q$ is abelian, that is, $[\top_Q,\top_Q]_Q = \bot_Q$.

A loop $Q$ is therefore \emph{abelian} if $[Q,Q]_Q=1$. It is well-known that a group is abelian if and only if it is a commutative group. More generally, a loop is abelian if and only if it is a commutative group, cf.~\cite{StaVojComm}.

A conflict in terminology arises when the adjective ``abelian'' is used for a normal subloop $X$ of a loop $Q$, since $X$ can be seen either as a congruence of $Q$ or as a loop in its own right. We will therefore be more careful in that context and say that a normal subloop $X$ of a loop $Q$ is \emph{abelian in $Q$} or that it \emph{induces an abelian congruence of $Q$} if $[X,X]_Q=1$, while we say that a normal subloop $X$ of a loop $Q$ is \emph{abelian} if $[X,X]_X=1$. Thus, for instance, the phrase ``$X$ is an abelian normal subloop of $Q$'' means that $X$ is a commutative group and $X$ is a normal subloop of $Q$.

Every normal subloop $X$ of $Q$ that induces an abelian congruence of $Q$ is an abelian normal subloop of $Q$. The converse is also true in the variety of groups, cf.~Lemma \ref{Lm:GrAb}, but there are numerous examples of loops $Q$ with an abelian normal subloop $X$ that does not induce an abelian congruence of $Q$. See \cite{StaVojComm} for examples of order $8$.

\begin{definition}\label{Df:AbExt}
Given a commutative group $(X,+,0)$, a loop $(F,\cdot,1)$ and mappings $\varphi,\psi:F\times F\to \aut(X)$ and $\theta:F\times F\to X$, the loop defined on $F\times X$ by
\begin{displaymath}
    (r,x)(s,y) = (rs,\ \varphi_{r,s}(x)+\psi_{r,s}(y)+\theta_{r,s})
\end{displaymath}
is an \emph{abelian extension of $X$ by $F$} if $\varphi_{r,1}=\id_X=\psi_{1,r}$ and $\theta_{1,r} = 0 = \theta_{r,1}$ for every $r\in F$.
\end{definition}

Central extensions are therefore those abelian extensions in which the automorphisms $\varphi_{r,s}$ and $\psi_{r,s}$ are trivial.

\begin{thm}[{\cite[Theorem 4.1]{StaVojAbel}}]\label{Th:AbExt}
Let $X$ be a normal subloop of a loop $Q$. Then $X$ is abelian in $Q$ (that is, $[X,X]_Q=1$) if and only if $Q$ is isomorphic to an abelian extension of $X$ by $Q/X$.
\end{thm}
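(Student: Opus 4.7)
I would establish both directions separately, using Theorem~\ref{Th:CommCongr} (due to Barnes) as the computational backbone.

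\emph{Sufficiency.} Suppose $Q$ is realized as an abelian extension on $F \times X$ with data $(\varphi,\psi,\theta)$, and identify $X$ with $\{1\} \times X \subseteq Q$. To verify $[X,X]_Q = 1$ via Theorem~\ref{Th:CommCongr}, fix $a = (1, a') \in X$ and pairs $(u_1, v_1), (u_2, v_2) \in \alpha_X$. Because $u_i / v_i \in X$, the elements $u_i = (r_i, x_i)$ and $v_i = (r_i, x_i')$ share a first coordinate. A direct coordinate computation from the extension formula shows that $T_{u_1}(a)$, $L_{u_1,u_2}(a)$, and $R_{u_1,u_2}(a)$ depend only on $r_1, r_2$ and $a'$, with the $x_i$'s canceling thanks to the commutativity of $+$ on $X$ and the normalizations $\varphi_{r,1} = \id = \psi_{1,r}$. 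Hence every generator of $[X,X]_Q$ given by Theorem~\ref{Th:CommCongr} is trivial.

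\emph{Necessity.} Assume $[X,X]_Q = 1$ and set $F = Q/X$. Choose a section $s\colon F \to Q$ with $s(1) = 1$. Every element of $Q$ admits a unique factorization $s(r) x$ with $r \in F$, $x \in X$, giving a bijection $Q \to F \times X$. Transporting the loop product yields a formula
\[
s(r)x \cdot s(t)y = s(rt) \cdot \bigl(\varphi_{r,t}(x) + \psi_{r,t}(y) + \theta_{r,t}\bigr)
\]
with $X$ written additively, once one verifies that the right-hand product separates into the three indicated summands. Specializing $x$ and/or $y$ to $1$ then defines $\theta_{r,t}$, $\varphi_{r,t}(x)$, and $\psi_{r,t}(y)$ uniquely, and the normalizations $\varphi_{r,1} = \id = \psi_{1,r}$, $\theta_{1,r} = 0 = \theta_{r,1}$ follow immediately from $s(1) = 1$.

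\emph{Main obstacle.} The crux is to prove that the functions $\varphi_{r,t}, \psi_{r,t}$ so extracted are genuine \emph{automorphisms} of $X$, and, at a more basic level, that the product $s(r)x \cdot s(t)y$ actually does decompose as a sum of three independently determined summands (one depending only on $x$, one only on $y$, one on neither). For a general normal subloop this decomposition fails; abelianess is exactly what forces it. Concretely, every failure of additivity or of the automorphism property can be encoded as a nontrivial quotient of the form $T_{u_1}(a)/T_{v_1}(a)$, $L_{u_1,u_2}(a)/L_{v_1,v_2}(a)$, or $R_{u_1,u_2}(a)/R_{v_1,v_2}(a)$ with $u_i/v_i \in X$ and $a \in X$, and Theorem~\ref{Th:CommCongr} together with $[X,X]_Q = 1$ forces all such quotients to be $1$. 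Matching each algebraic identity with the right choice of generators is the technical bookkeeping carried out in \cite{StaVojAbel}.
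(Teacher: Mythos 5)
First, a point of order: the paper does not prove this statement at all --- it is quoted verbatim from \cite[Theorem 4.1]{StaVojAbel}, so there is no in-paper proof to compare against. Judged on its own terms, your sufficiency direction is a sound outline: in an abelian extension one computes, e.g., $T_u(a)=(1,\varphi_{1,r}^{-1}\psi_{r,1}(a'))$ for $u=(r,x)$ and $a=(1,a')$, and likewise $L_{u_1,u_2}(a)$ and $R_{u_1,u_2}(a)$ depend only on the cosets of the $u_i$, so all generating pairs in Theorem~\ref{Th:CommCongr} are diagonal and the generated congruence is $\bot_Q$. (The cancellation of the $x_i$ is really due to $\varphi_{r,s},\psi_{r,s}$ being endomorphisms, not merely to commutativity of $+$, but the computation does go through.) Using Barnes' description here is in fact a cleaner route than the original, which had to work with total inner mappings or the term condition directly.

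The necessity direction, however, contains a genuine gap: you correctly name the crux --- that $s(r)x\cdot s(t)y$ must decompose as $s(rt)\cdot(\varphi_{r,t}(x)+\psi_{r,t}(y)+\theta_{r,t})$ with $\varphi_{r,t},\psi_{r,t}\in\aut(X)$ --- and then you do not prove it, writing instead that the matching of identities to generators ``is the technical bookkeeping carried out in \cite{StaVojAbel}.'' That bookkeeping \emph{is} the theorem. Concretely, from $[X,X]_Q=1$ and Theorem~\ref{Th:CommCongr} you know that $T_{u}|_X$, $L_{u_1,u_2}|_X$ and $R_{u_1,u_2}|_X$ depend only on the cosets $u_iX$; you must then show (a) that the map $x\mapsto s(rt)\bs(s(r)x\cdot s(t))$ differs from a bijection $\varphi_{r,t}$ of $X$ by the constant $\theta_{r,t}$, (b) the analogous claim for $y\mapsto s(rt)\bs(s(r)\cdot s(t)y)$, (c) that the two contributions combine additively for general $x$ and $y$ simultaneously, and (d) that $\varphi_{r,t}$ and $\psi_{r,t}$ are additive, hence automorphisms. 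None of (a)--(d) is an instance of a single generator being trivial; each requires comparing two products of \emph{general} elements $s(r)x\cdot s(t)y$ and $s(r)x'\cdot s(t)y'$, which must first be rewritten in terms of the inner mappings $L_{u_1,u_2}$, $R_{u_1,u_2}$, $T_{u}$ evaluated at elements of $X$ with $u_i$ ranging over a fixed coset. Asserting that ``every failure of additivity can be encoded as a nontrivial quotient'' is the claim to be proved, not a proof of it. As it stands, the hard half of the theorem is deferred to the reference rather than established.
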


The above external description of abelian extensions can be rewritten internally as follows. Let $(X,\cdot,1)$ be an abelian normal subloop of a loop $(Q,\cdot,1)$. Let $U$ be a (left) transversal to $X$ in $Q$ such that $1\in U$. Then $Q$ is an abelian extension of $X$ by $Q/X$ if there exist $\varphi$, $\psi:U\times U\to \aut(X)$ and $\theta:U\times U\to X$ satisfying $\varphi_{r,1}=\id_X=\psi_{1,r}$ and $\theta_{1,r}=1=\theta_{r,1}$ for every $r\in U$, and
\begin{equation}\label{Eq:i1}
    rx\cdot sy = u_{r,s}\cdot \varphi_{r,s}(x)\psi_{r,s}(y)\theta_{r,s}
\end{equation}
holds for every $r,s\in U$ and $x,y\in X$, where $u_{r,s}$ is the unique element of $U\cap (rs)X$.

There is a substantial difference between abelian normal subloops of $Q$ and normal subloops of $Q$ that are abelian in $Q$. This can be illustrated by considering the multiplication table of $Q$. Let $X$ be a commutative group. For $X$ to be an abelian normal subloop of $Q$, nothing else is required but that $Q$ is a disjoint union of $\{uX:u\in U\}$ for some subset $U\subseteq Q$, the multiplication table of $X$ is reproduced in the subsquare $X\times X$, and for every $r,s\in U$ the subsquare $rX \times sX$ is a latin square with entries running through $u_{r,s}X$, where $u_{r,s}\in U\cap (rs)X$. However, for $X$ to induce an abelian congruence of $Q$, the structure of the subsquare $rX \times sX$ must be much more rigid, conforming to \eqref{Eq:i1}.

Using the notion of abelian extension, it is easy to show that every abelian normal subgroup of a group $Q$ is abelian in $Q$. Here is a more general result:

\begin{lem}\label{Lm:GrAb}
Let $Q$ be a loop and let $X$ be an abelian normal subloop of $Q$ such that $X\le\nuc_m(Q)\cap\nuc_r(Q)$. Then $X$ induces an abelian congruence of $Q$.
\end{lem}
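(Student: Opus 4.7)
The plan is to invoke Theorem \ref{Th:AbExt} by producing the internal description \eqref{Eq:i1} of $Q$ as an abelian extension of $X$ by $Q/X$. Fix a left transversal $U \ni 1$ to $X$ in $Q$, and for $r,s \in U$ let $u_{r,s}$ be the unique element of $U \cap (rs)X$ and $\theta_{r,s} \in X$ the unique element with $rs = u_{r,s}\theta_{r,s}$. Define $\psi_{r,s} = \id_X$ and $\varphi_{r,s} = \rho_s$, where $\rho_s(x) = s\bs(xs)$. The normalization conditions $\varphi_{r,1} = \id_X = \psi_{1,s}$ and $\theta_{1,r} = 1 = \theta_{r,1}$ follow at once from $1 \in U$.

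The first step is to verify that $\rho_s$ is an automorphism of $X$. Normality of $X$ gives $Xs = sX$, so $\rho_s = L_s^{-1}R_s$ restricts to a bijection of $X$. The homomorphism property follows from the chain
\[
(xy)s = x(ys) = x(s\rho_s(y)) = (xs)\rho_s(y) = (s\rho_s(x))\rho_s(y) = s(\rho_s(x)\rho_s(y)),
\]
whose nonobvious associativity moves are justified by $y,\rho_s(y),\rho_s(x) \in X \le \nuc_m(Q)$; cancelling $L_s$ on the left yields $\rho_s(xy) = \rho_s(x)\rho_s(y)$.

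The main step is the identity $rx\cdot sy = u_{r,s}\cdot \rho_s(x)\,y\,\theta_{r,s}$, which is \eqref{Eq:i1} up to permuting the three factors inside the commutative group $X$. Starting from the left, I would apply in turn: $y \in \nuc_r(Q)$ to rewrite as $(rx\cdot s)y$; $x \in \nuc_m(Q)$ together with $xs = s\rho_s(x)$ to rewrite as $(r\cdot s\rho_s(x))y$; $y \in \nuc_r(Q)$ again to get $r(s\rho_s(x)\cdot y)$; $\rho_s(x) \in \nuc_m(Q)$ to reach $r(s\cdot \rho_s(x)y)$; $\rho_s(x)y \in \nuc_r(Q)$ to collect $(rs)(\rho_s(x)y) = (u_{r,s}\theta_{r,s})(\rho_s(x)y)$; and finally $\theta_{r,s} \in \nuc_m(Q)$ together with commutativity in $X$ to land on $u_{r,s}\cdot\rho_s(x)y\theta_{r,s}$. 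The main obstacle is purely bookkeeping---there is no conceptual difficulty beyond tracking which nuclear hypothesis justifies each associativity move---and once \eqref{Eq:i1} has been verified, Theorem \ref{Th:AbExt} delivers $[X,X]_Q = 1$.
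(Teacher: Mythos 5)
Your proof is correct and takes essentially the same route as the paper: your $\rho_s = L_s^{-1}R_s$ is exactly $T_s^{-1}$, which is precisely the paper's choice of $\varphi_{r,s}$ (with $\psi_{r,s}=\id_X$ and the same $\theta_{r,s}$), and your chain of nuclear associativity moves matches the paper's computation almost step for step; the only difference is that the paper cites Lemma \ref{Lm:NucAutT} for $\varphi_{r,s}\in\aut(X)$ where you verify it directly. One cosmetic remark: in your automorphism check the move $x(s\rho_s(y)) = (xs)\rho_s(y)$ is justified by $\rho_s(y)\in\nuc_r(Q)$ rather than $\nuc_m(Q)$, but since $X\le\nuc_m(Q)\cap\nuc_r(Q)$ this does not affect the argument.
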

\begin{proof}
By Theorem \ref{Th:AbExt}, it suffices to show that $Q$ is an abelian extension of $X$ by $Q/X$. Let $U$ be a transversal to $X$ in $Q$. For $r,s\in U$, let $\varphi_{r,s}$ be the restriction of $T_s^{-1}$ to $X$, let $\psi_{r,s}=\id_X\in\aut(X)$, let $u_{r,s}\in U\cap (rs)X$ and let $\theta_{r,s} = u_{r,s}\ldiv (rs)\in X$. By Lemma \ref{Lm:NucAutT}, $\varphi_{r,s}\in\aut(X)$.

Let $x,y\in X$. Note that $sT_s^{-1}(x) = s(s\ldiv(xs)) = xs$. Since $x,y,T_s^{-1}(x)\in X\le \nuc_m(Q)\cap\nuc_r(Q)$, we have
\begin{align*}
	rx\cdot sy &= r(x\cdot sy) = r(xs\cdot y) = r(sT_s^{-1}(x)\cdot y) = r(s\cdot T_s^{-1}(x)y) = rs\cdot T_s^{-1}(x)y\\
	&= u_{r,s}\theta_{r,s}\cdot T_s^{-1}(x)y = u_{r,s}\cdot \theta_{r,s}(T_s^{-1}(x)y) =u_{r,s}\cdot T_s^{-1}(x)y\theta_{r,s},
\end{align*}
where the last step follows from the fact that $\theta_{r,s}$, $T_s^{-1}(x)$ and $y$ lie in the commutative group $X$. We have obtained an instance of \eqref{Eq:i1}, proving that $Q$ is an abelian extension of $X$ by $Q/X$.
\end{proof}

\subsection{Classical solvability and congruence solvability}

The history of the notion of solvability in loop theory is convoluted.

Albert defined solvable loops in \cite[p.~412]{AlbertII} as loops whose composition factors have no nontrivial subloops, mimicking a result from groups that states that a finite group is solvable if and only if each of its composition factors is isomorphic to a group of prime order. Albert's definition of solvability has been abandoned.

Bruck introduced the notion of a derived subloop in \cite[p.~268]{BruckTrans}. The \emph{derived subloop} $Q'$ of a loop $Q$ is the smallest normal subloop $H$ of $Q$ such that $Q/H$ is a commutative group. The \emph{derived series} of $Q$ is then the (possibly infinite) series
\begin{displaymath}
    Q=Q_0\ge Q_1\ge\cdots \ge Q_n\ge \cdots
\end{displaymath}
such that for every $i\ge 0$, $Q_{i+1}$ is the derived subloop of $Q_i$. Bruck then defines solvable loops as loops whose derived series reaches the trivial subloop $1$ in finitely many steps.

Another definition of solvability for loops was given by Glauberman. A loop $Q$ is said to be solvable in \cite[p.~397]{GlaubermanII} if there exists a series
\begin{displaymath}
    Q=Q_0\ge Q_1\ge\cdots\ge Q_n=1
\end{displaymath}
such that for every $0\le i<n$, $Q_{i+1}$ is a normal subloop of $Q_i$ and the factor $Q_i/Q_{i+1}$ is a commutative group. By adopting the standard proof from group theory, it is not difficult to show that Bruck's and Glauberman's definitions of solvability for loops are equivalent. In particular, Glauberman's definition of solvability will not be affected if we demand that for every $0\le i<n$, $Q_{i+1}$ is a normal subloop of $Q$, not just a normal subloop of $Q_i$.

The commutator theory of Freese and McKenzie \cite{FM} naturally leads to a definition of solvability in congruence modular varieties. In loops, their concept of solvability, called \emph{congruence solvability} in Definition \ref{Df:Solv}, is strictly stronger than the equivalent solvability concepts of Bruck and Glauberman, called \emph{classical solvability} in Definition \ref{Df:Solv}. The terminology comes from \cite{StaVojAbel}.

\begin{definition}\label{Df:Solv}
A \emph{classically solvable series} for a loop $Q$ is a series
\begin{displaymath}
    Q=Q_0\ge Q_1\ge\cdots\ge Q_n=1,
\end{displaymath}
such that for every $0\le i<n$, $Q_{i+1}$ is a normal subloop of $Q$ and the factor $Q_i/Q_{i+1}$ is abelian (that is, a commutative group). A loop $Q$ is \emph{classically solvable} if it contains a classically solvable series.

A \emph{congruence solvable series} for a loop $Q$ is a series
\begin{displaymath}
    Q=Q_0\ge Q_1\ge\cdots\ge Q_n=1
\end{displaymath}
such that for every $0\le i<n$, $Q_{i+1}$ is a normal subloop of $Q$ and the factor $Q_i/Q_{i+1}$ is abelian in $Q/Q_{i+1}$ (that is, $Q_i/Q_{i+1}$ induces an abelian congruence of $Q/Q_{i+1}$). A loop $Q$ is \emph{congruence solvable} if it contains a congruence solvable series.
\end{definition}

Obviously, every congruence solvable loop is classically solvable. The converse is true for groups but not for loops, with small counterexamples easy to construct, cf.~\cite{FM,StaVojComm}. The following problem is open:

\begin{prob}\label{Pr:MoufSolv}
Is every classically solvable Moufang loop congruence solvable?
\end{prob}

Towards a solution of Problem \ref{Pr:MoufSolv}, we prove in Sections \ref{Sc:Coincide} and \ref{Sc:Odd} that if $Q$ is a $6$-divisible classically solvable Moufang loop or a Moufang loop of odd order, then $Q$ is congruence solvable.

Since every central series is a congruence solvable series, we have:

\begin{thm}\label{Th:NilpSolv}
Centrally nilpotent loops are congruence solvable and classically solvable.
\end{thm}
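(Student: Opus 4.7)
The plan is to take a central series and verify that it is simultaneously a classically solvable series and a congruence solvable series; this yields both conclusions at once.

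Suppose $Q$ is centrally nilpotent with central series
\begin{displaymath}
    Q=Q_0\ge Q_1\ge\cdots\ge Q_n=1,
\end{displaymath}
so that each $Q_{i+1}$ is normal in $Q$ and $Q_i/Q_{i+1}\le Z(Q/Q_{i+1})$. First I would dispose of classical solvability. Since $Q_i/Q_{i+1}$ sits inside the center of the quotient $Q/Q_{i+1}$, and the center of any loop is a commutative group (its elements are nuclear and pairwise commute), each factor $Q_i/Q_{i+1}$ is itself a commutative group, which is exactly the requirement for a classically solvable series in Definition \ref{Df:Solv}.

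For congruence solvability, I would invoke monotonicity of the Freese--McKenzie commutator: if $\alpha\le\gamma$ and $\beta\le\delta$ are congruences of an algebra, then $[\alpha,\beta]\le[\gamma,\delta]$. Applied in $Q/Q_{i+1}$ with $\alpha=\beta$ the congruence induced by $Q_i/Q_{i+1}$ and $\gamma=\delta=\top_{Q/Q_{i+1}}$, this gives
\begin{displaymath}
    [Q_i/Q_{i+1},Q_i/Q_{i+1}]_{Q/Q_{i+1}}\le [Q_i/Q_{i+1},Q/Q_{i+1}]_{Q/Q_{i+1}}=1,
\end{displaymath}
where the final equality is the assumption that $Q_i/Q_{i+1}$ is central in $Q/Q_{i+1}$. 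Hence each factor is abelian in $Q/Q_{i+1}$, and the same series witnesses congruence solvability per Definition \ref{Df:Solv}.

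There is no real obstacle here; the statement is essentially a bookkeeping consequence of the definitions together with the general monotonicity of the commutator. If one prefers to avoid appealing to monotonicity, an alternative is to note that by Theorem \ref{Th:CenExt} each centrally nilpotent quotient $Q/Q_{i+1}$ is a central extension of $Q_i/Q_{i+1}$ by $(Q/Q_{i+1})/(Q_i/Q_{i+1})$, hence a fortiori an abelian extension in the sense of Definition \ref{Df:AbExt}, so Theorem \ref{Th:AbExt} directly yields that $Q_i/Q_{i+1}$ is abelian in $Q/Q_{i+1}$.
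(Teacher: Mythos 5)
Your proof is correct and matches the paper's approach: the paper proves this theorem with the single observation that every central series is a congruence solvable series, which is exactly what you verify (via monotonicity of the commutator, or equivalently via Theorems \ref{Th:CenExt} and \ref{Th:AbExt}), with classical solvability following since the factors are commutative groups. You have simply spelled out the details the paper leaves implicit.
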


Call a loop $Q$ an \emph{iterated abelian extension} if either $Q$ is a commutative group, or $Q$ is an abelian extension of a commutative group $X$ by some loop that is an iterated abelian extension. It was shown in \cite[Corollary 5.1]{StaVojAbel} that iterated abelian extensions of loops are precisely congruence solvable loops. For the sake of completeness, let us give a short proof here:

\begin{prop}\label{Pr:Solv}
A loop is congruence solvable if and only if it is an iterated abelian extension.
\end{prop}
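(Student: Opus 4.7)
The plan is to prove both directions by induction on the length of the relevant series, with Theorem \ref{Th:AbExt} as the bridge that turns the congruence-theoretic statement ``$X$ is abelian in $Q$'' into the structural statement ``$Q$ is an abelian extension of $X$ by $Q/X$''.

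For the forward direction, assume $Q$ is congruence solvable and induct on the length $n$ of a congruence solvable series $Q = Q_0 \ge Q_1 \ge \cdots \ge Q_n = 1$. If $n \le 1$, then $Q$ is either trivial or abelian in itself, hence a commutative group, and so an iterated abelian extension by definition. For $n \ge 2$, the subloop $Q_{n-1}$ is normal in $Q$, is a commutative group (being abelian in $Q/Q_n = Q$), and induces an abelian congruence of $Q$. By Theorem \ref{Th:AbExt}, $Q$ is isomorphic to an abelian extension of $Q_{n-1}$ by $Q/Q_{n-1}$. The quotient $Q/Q_{n-1}$ inherits the shorter series $Q/Q_{n-1} \ge Q_1/Q_{n-1} \ge \cdots \ge Q_{n-1}/Q_{n-1} = 1$, which is congruence solvable via the third isomorphism theorem $(Q/Q_{n-1})/(Q_{i+1}/Q_{n-1}) \cong Q/Q_{i+1}$. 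By the inductive hypothesis, $Q/Q_{n-1}$ is an iterated abelian extension, and hence so is $Q$.

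For the converse, induct on the number of iterations in the definition of iterated abelian extension. A commutative group $Q$ admits the trivial congruence solvable series $Q \ge 1$. Otherwise $Q$ is an abelian extension of a commutative group $X$ by a strictly simpler iterated abelian extension $F$. Again by Theorem \ref{Th:AbExt}, we may identify $X$ with a normal subloop of $Q$ that is abelian in $Q$ and for which $Q/X \cong F$. By induction, $F$ admits a congruence solvable series $F = F_0 \ge \cdots \ge F_m = 1$. Pulling each $F_i$ back to its preimage $Q_i \unlhd Q$ under the canonical projection $Q \to Q/X$ and appending $Q_{m+1} = 1$ yields
\begin{displaymath}
    Q = Q_0 \ge Q_1 \ge \cdots \ge Q_m = X \ge Q_{m+1} = 1.
\end{displaymath}
The last step is congruence solvable because $X$ is abelian in $Q$. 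Each earlier step is congruence solvable because the isomorphisms $Q_i/Q_{i+1} \cong F_i/F_{i+1}$ and $Q/Q_{i+1} \cong F/F_{i+1}$ transport the witnessing abelian congruence from $F/F_{i+1}$ back to $Q/Q_{i+1}$.

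The only point requiring care is the preservation of abelianess of a normal subloop under the correspondence between normal subloops of $Q$ containing $X$ and normal subloops of $Q/X$; this is routine from Theorem \ref{Th:CommCongr}, since the commutator of congruences commutes with surjective homomorphisms. I do not anticipate any serious obstacle: the argument is essentially a standard lift-and-project induction anchored by Theorem \ref{Th:AbExt}.
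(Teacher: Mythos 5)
Your proposal is correct and follows essentially the same route as the paper: both directions are proved by induction on the length of the series, with Theorem \ref{Th:AbExt} converting between ``abelian in $Q$'' and ``abelian extension'', and the third isomorphism theorem transporting abelianess of factors between $Q$ and $Q/X$. Your closing worry about commutators and surjections is unnecessary --- the isomorphisms $(Q_i/X)/(Q_{i+1}/X)\cong Q_i/Q_{i+1}$ and $(Q/X)/(Q_{i+1}/X)\cong Q/Q_{i+1}$ already carry the abelian congruence across, which is exactly how the paper handles it.
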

\begin{proof}
Let $Q=Q_0>Q_1>\cdots>Q_n=1$ be a congruence solvable series for $Q$. We prove by induction on $n$ that $Q$ is an iterated abelian extension. If $n=1$ then the series becomes $Q>1$ and $Q$ is a commutative group, hence a congruence solvable loop. If $n>1$, let $X=Q_{n-1}\unlhd Q$ and note that $Q>X>1$. Since $X$ and $1$ are adjacent terms in the original series, $X/1=X$ is abelian in $Q/1=Q$. By Theorem \ref{Th:AbExt}, $Q$ is an abelian extension of $X$ by $Q/X$. It remains to show that $Q/X$ is an iterated abelian extension. Consider the series
\begin{equation}\label{Eq:Series}
    Q/X=Q_0/X>Q_1/X>\cdots>Q_{n-1}/X=X/X.
\end{equation}
Since $Q_i\unlhd Q$, we have $Q_i/X\unlhd Q/X$ by the Correspondence Theorem. As $Q_i/Q_{i+1}$ is abelian in $Q/Q_{i+1}$, the factor loop $(Q_i/X)/(Q_{i+1}/X)\cong Q_i/Q_{i+1}$ is abelian in $(Q/X)/(Q_{i+1}/X)\cong Q/Q_{i+1}$. Hence \eqref{Eq:Series} is a congruence solvable series, $Q/X$ is congruence solvable and, by the induction assumption, $Q/X$ is an iterated abelian extension.

Conversely, suppose that $Q$ is an iterated abelian extension constructed in $n$ steps, each an abelian extension. We prove by induction on $n$ that $Q$ is congruence solvable. If $n=0$ then $Q$ is a commutative group and we are done. Else let $Q$ be an abelian extension of a commutative group $X$ by $Q/X$, where $Q/X$ is constructed by $n-1$ abelian extensions. By Theorem \ref{Th:AbExt}, $X$ is abelian in $Q$. By the induction assumption, $Q/X$ is congruence solvable. Let $Q/X=Q_0/X>\cdots>Q_m/X=X/X$ be a congruence solvable series. Consider the series
\begin{equation}\label{Eq:Series2}
    Q=Q_0>Q_1>\cdots >Q_{m-1}>Q_m=X>1.
\end{equation}
Since $Q_i/X\unlhd Q/X$, we have $Q_i\unlhd Q$. Moreover, $Q_i/Q_{i+1}\cong (Q_i/X)/(Q_{i+1}/X)$ is abelian in $(Q/X)/(Q_{i+1}/X)\cong Q/Q_{i+1}$ for every $i<m$. The last inclusion in \eqref{Eq:Series2} is $X>1$, and certainly $X/1=X$ is abelian in $Q=Q/1$ as we have already shown. Therefore \eqref{Eq:Series2} is a congruence solvable series and $Q$ is congruence solvable.
\end{proof}

\section{Moufang loops with an abelian normal subgroup that does not induce an abelian congruence}\label{Sc:Examples}

By inspecting the library of small Moufang loops available in the \texttt{GAP} \cite{GAP} package \texttt{LOOPS} \cite{LOOPS}, it was observed in \cite{StaVojComm} that there exists a Moufang loop $Q$ of order $16$ with an abelian normal subloop $X$ (isomorphic to $C_2\times C_4$) such that $X$ does not induce an abelian congruence of $Q$.

In this section we offer a general construction of Moufang loops $Q$ containing an abelian normal subloop $X$ that does not induce an abelian congruence of $Q$.

Recall that for a vector space $V$ over a field $F$, a mapping $q:V\to F$ is a \emph{quadratic form} if $q(\lambda u) = \lambda^2u$ for every $\lambda\in F$, $u\in V$, and if $h:V\times V\to F$ defined by $h(u,v)= q(u+v)-q(u)-q(v)$ is a (symmetric) bilinear form. The form $h$ is referred to as the \emph{associated bilinear form}.

Also recall that a loop $Q$ is an \emph{extra loop} if it satisfies the identity $x(y\cdot zx) = (xy\cdot z)x$. Fenyves proved in \cite{Fenyves} that extra loops are Moufang.

\begin{prop}\label{Pr:Example}
Let $W=(W,+)$ be a commutative group with subgroups $F\le B\le W$. Suppose that $F=\{0,1\}$ and $\overline{W}=W/B$ is an elementary abelian $2$-group. Let $\overline q:\overline{W}\to F$ be a quadratic form with associated bilinear form $\overline h:\overline W\times \overline W\to F$. Let $q:W\to F$ and $h:W\times W\to F$ be defined by $q(u) = \overline q(\overline u)$ and $h(u,v) = \overline h(\overline u,\overline v)$. Denote by $Q=\mathcal Q(F,B,W,\overline q)$ the magma defined on $F\times W$ by
\begin{equation}\label{Eq:Example}
    (i,u)\cdot (j,v) = (i+j,\ u+v+jq(u)+ih(u,v)).
\end{equation}
Then:
\begin{enumerate}
\item[(i)] $Q$ is a centrally nilpotent loop, a central extension of the commutative group $B$ by the elementary abelian $2$-group $F\times \overline W$,
\item[(ii)] $Q$ is congruence solvable and hence classically solvable,
\item[(iii)] $Q$ is an extra loop,
\item[(iv)] $Q$ is a group if and only if the quadratic form $\overline q$ is linear,
\item[(v)] $X = 0\times W$ is an abelian normal subloop of $Q$,
\item[(vi)] if $Q$ is not a group, then the congruence of $Q$ induced by $X$ is not abelian.
\end{enumerate}
\end{prop}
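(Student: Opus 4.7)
Parts (i)--(v) I would handle by direct computation from \eqref{Eq:Example}, reserving the substance for (vi). Throughout I would repeatedly use that $q(u), h(u,v) \in F \le B$, so these twist terms are absorbed by the quotient $W \to \overline W$; that $\overline h$ is symmetric with $\overline h(u,u) = 0$ because $F = \mathbb{F}_2$; and that $\overline q(u+v) = \overline q(u) + \overline q(v) + \overline h(u,v)$.

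For (i), I would check that $(0,0)$ is a two-sided identity and that each left translation $L_{(i,u)}$ is bijective (the only nontrivial case $i=1$ requires inverting the affine map $v \mapsto v + h(u,v)$, which is bijective because $h(u,\cdot)$ vanishes on $F$). The projection $(i,u) \mapsto (i, \overline u)$ onto the elementary abelian $2$-group $F \times \overline W$ is then a homomorphism, and its kernel $\{0\} \times B \cong B$ consists of central elements, exhibiting $Q$ as a central extension and hence as centrally nilpotent. Part (ii) follows via Theorem \ref{Th:NilpSolv}. For (v), $(i,u) \mapsto i$ is a homomorphism onto $F$ with kernel $X$, and \eqref{Eq:Example} restricted to $X$ reduces to $(0,u)(0,v) = (0,u+v)$, so $X \cong (W,+)$ is a commutative group.

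For (iv), I would expand $(xy)z$ and $x(yz)$ for $x=(i,u), y=(j,v), z=(k,w)$ and observe that they agree in the $F$-coordinate and differ in the $W$-coordinate by exactly $k h(u,v) + j h(u,w) + i h(v,w) \in F$. Thus $Q$ is a group iff this quantity vanishes identically, iff $h \equiv 0$ (set $i=k=0,\ j=1$), iff $\overline h \equiv 0$, iff $\overline q$ is additive; and since $F = \mathbb{F}_2$, additivity coincides with linearity ($\overline q(\lambda v) = \lambda^2 \overline q(v) = \lambda \overline q(v)$). For (iii), an analogous but longer expansion shows both sides of $x(y\cdot zx) = (xy\cdot z)x$ equal the same element of $F \times W$, yielding the extra identity.

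The main step is (vi), which I would prove contrapositively: assuming $X$ induces an abelian congruence of $Q$, I will force $\overline q$ to be linear and conclude by (iv). By Theorem \ref{Th:AbExt} and its internal formulation \eqref{Eq:i1}, for the transversal $U = \{(0,0),\ (1,0)\}$ there exist $\Phi, \Psi \in \aut(W)$ and $\Theta \in W$ (realising $\varphi_{r,s}, \psi_{r,s}, \theta_{r,s}$ under $X \cong W$) such that, writing $r = s = (1,0)$ (so that $rs = (0,0) = u_{r,s}$) and $x = (0,\tilde x), y = (0,\tilde y)$,
\begin{equation*}
rx \cdot sy \;=\; \bigl(0,\ \Phi(\tilde x) + \Psi(\tilde y) + \Theta\bigr)
\end{equation*}
for all $\tilde x, \tilde y \in W$. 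A direct computation gives $rx \cdot sy = (0,\ \tilde x + \tilde y + q(\tilde x) + h(\tilde x, \tilde y))$. Specialising $\tilde x = \tilde y = 0$, then $\tilde x = 0$, then $\tilde y = 0$ forces in turn $\Theta = 0$, $\Psi = \id_W$, and $\Phi(\tilde x) = \tilde x + q(\tilde x)$. But $\Phi$ is required to be a group automorphism of $W$, so $q$ itself must be additive, forcing $h \equiv 0$ and, by (iv), showing $Q$ is a group. The main obstacles are the clerical bookkeeping in (iii) and (iv); the logical content of (vi) is simply that $\id_W + q$ cannot be a homomorphism of $W$ unless $q$ is one.
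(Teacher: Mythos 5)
Your plan is correct and, in the decisive part (vi), coincides with the paper's argument: both reduce to the observation that $u\mapsto u+q(u)$ would have to be an additive automorphism of $W$, which forces $h=0$ and hence associativity via (iv). (The paper extracts this map as $\varphi_{0,1}$ working over the quotient $F$; you extract it from $\varphi_{r,s}$ with $r=s=(1,0)$ and $\tilde y=0$ --- same content.) Where you genuinely diverge is in (iii) and (iv). The paper proves (iii) by first noting that all squares land in $0\times B\le Z(Q)$ and then invoking the Chein--Robinson characterization of extra loops as Moufang loops with nuclear squares, so that only one Moufang identity needs to be expanded; you propose to verify the extra identity $x(y\cdot zx)=(xy\cdot z)x$ directly. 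I checked that this expansion does close: both sides reduce to
\begin{displaymath}
\bigl(j+k,\ 2u+v+w+(i+j+k)q(u)+(i+k)q(v)+iq(w)+(i+j)h(u,v)+(i+k)h(u,w)+jh(v,w)\bigr)
\end{displaymath}
using $h(u,u)=0$, symmetry of $h$, $q(u+v)=q(u)+q(v)+h(u,v)$, and the fact that $F$ has exponent $2$ in $W$; so your route is self-contained (no appeal to \cite{CheinRobinson}) at the cost of a longer computation. For (iv), your associator formula $kh(u,v)+jh(u,w)+ih(v,w)$ is correct and is arguably cleaner than the paper's argument via the group cocycle identity: it yields both implications of (iv) at once and, as a bonus, re-proves that $0\times B$ is nuclear. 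Two small items you should still write out: bijectivity of the \emph{right} translations (same argument as for the left ones --- the perturbation lies in $F\le B$ and $q$, $h$ are constant on $B$-cosets), and the one-line check that elements of $0\times B$ commute and associate with everything before you invoke Theorem \ref{Th:CenExt}.
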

\begin{proof}
(i) The commutative group $W$ is (isomorphic to) a central extension of $B$ by $\overline W$, with multiplication on $\overline W\times B$ given by
\begin{displaymath}
    (\overline u,a)(\overline v,b) = (\overline u+\overline v, a+b+\overline\theta_{\overline u,\overline v}),
\end{displaymath}
where $\overline\theta:\overline W\times \overline W\to B$ is some group cocycle. The multiplication formula \eqref{Eq:Example} on $F\times W$ can then be seen as a multiplication on $F\times\overline W\times B$, namely
\begin{displaymath}
    (i,\overline u,a)(j,\overline v,b) = (i+j,\overline u+\overline v, a+b+\overline\theta_{\overline u,\overline v}+j\overline q(\overline u)+i\overline h(\overline u,\overline v)).
\end{displaymath}
Hence $Q$ is isomorphic to the magma defined on $(F\times\overline W)\times B$ by
\begin{displaymath}
    ((i,\overline u),a)((j,\overline v),b) = ((i,\overline u)+(j,\overline v),a+b+\theta_{(i,\overline u),(j,\overline v)}),
\end{displaymath}
where $\theta_{(i,\overline u),(j,\overline v)} = \overline\theta_{\overline u,\overline v}+j\overline q(\overline u) + i\overline h(\overline u,\overline v)$. Since $\theta_{(0,\overline 0),(j,\overline v)} = \overline\theta_{\overline 0,\overline v} + j\overline q(\overline 0) = 0$ and $\theta_{(i,\overline u),(0,\overline 0)} = \overline\theta_{\overline u,\overline 0} + i\overline h(\overline u,\overline 0) = 0$, we see at once that $\theta$ is a loop cocycle and $Q$ is a central extension of the commutative group $B$ by the elementary abelian $2$-group $F\times\overline W$. Hence $Q$ is a centrally nilpotent loop, finishing the proof of part (i). Part (ii) then follows from Theorem \ref{Th:NilpSolv}.

(iii) Chein an Robinson characterized extra loops as Moufang loops in which squares are in the nucleus \cite{CheinRobinson}. In the isomorphic copy of $Q$ from part (i) we have $((i,\overline u),a)^2 = ((0,\overline 0),2a+\theta_{(i,\overline u),(i,\overline u)})\in 0\times B\le Z(Q)\le\nuc(Q)$. To prove that $Q$ is extra, it therefore suffices to check one of the Moufang identities, say $(xy\cdot x)z = x(y\cdot xz)$. By definition, $q(b+v)=q(v)$ and $h(b+v,w) = h(v,w)$ for every $b\in B$ and $v,w\in W$. Note that $2W\subseteq B$ since $\overline W = W/B$ is an elementary abelian $2$-group. Finally observe that $h(u,u) = \overline h(\overline u,\overline u) = \overline q(\overline 0)+2\overline q(\overline u) = 0$ and $h(u,u+v) = h(u,u)+h(u,v) = h(u,v)$ for every $u,v\in Q$. Let $x=(i,u)$, $y=(j,v)$ and $z=(k,w)\in F\times W$. We then calculate
\begin{align*}
    &((i,u)(j,v)\cdot (i,u))(k,w)\\
    &\quad = (i+j,u+v+jq(u)+ih(u,v))(i,u)\cdot (k,w)\\
    &\quad = (j,2u+v+jq(u)+ih(u,v)+iq(u+v)+(i+j)h(v,u))\cdot (k,w)\\
    &\quad = (j,2u+v+jq(u)+iq(u+v)+jh(u,v))\cdot (k,w)\\
    &\quad = (j+k,2u+v+w+jq(u)+iq(u+v)+jh(u,v)+kq(v)+jh(v,w)).
\end{align*}
On the other hand
\begin{align*}
    &(i,u)((j,v)\cdot (i,u)(k,w))\\
    &\quad = (i,u)\cdot (j,v)(i+k,u+w+kq(u)+ih(u,w))\\
    &\quad = (i,u)\cdot (i+j+k,u+v+w+kq(u)+ih(u,w)+(i+k)q(v)+jh(v,u+w))\\
    &\quad = (j+k,2u{+}v{+}w{+}kq(u){+}ih(u,w){+}(i{+}k)q(v){+}jh(v,u{+}w){+}(i{+}j{+}k)q(u){+}ih(u,v{+}w))\\
    &\quad = (j+k,2u+v+w+(i+j)q(u)+ih(u,v)+(i+k)q(v)+jh(v,u+w)).
\end{align*}
These two products agree in the first coordinate. Upon canceling like terms in the second coordinate (while taking advantage of bilinearity of $h$), only $iq(u+v)$ remains in the first product, while $iq(u)+ih(u,v)+iq(v)$ remains in the second product. Since $h(u,v)=q(u+v)+q(u)+q(v)$, we are done.

(iv) Suppose that $\overline q$ is linear. Then $\overline h=0$ and the cocycle of (i) reduces to $\theta_{(i,\overline u),(j,\overline v)}= \overline\theta_{\overline u,\overline v}+j\overline q(\overline u)$. Since $\overline\theta$ is a group cocycle, it satisfies the group cocycle identity
\begin{displaymath}
    \overline\theta_{\overline u,\overline v}+\overline\theta_{\overline u+\overline v,\overline w} = \overline\theta_{\overline v,\overline w} + \overline\theta_{\overline u,\overline v+\overline w}.
\end{displaymath}
We then have
\begin{align*}
    \theta_{(i,\overline u),(j,\overline v)} + \theta_{(i,\overline u)+(j,\overline v),(k,\overline w)}
       & = \overline\theta_{\overline u,\overline v} + \overline\theta_{\overline u+\overline v,\overline w} + j\overline q(\overline u) + k\overline q(\overline u+\overline v)\\
       & = \overline\theta_{\overline v,\overline w} + \overline\theta_{\overline u,\overline v+\overline w} + k\overline q(\overline v) + (j+k)\overline q(\overline u)\\
       & = \theta_{(j,\overline v),(k,\overline w)} + \theta_{(i,\overline u),(j,\overline v)+(k,\overline w)},
\end{align*}
which is a group cocycle identity for $\theta$, so $Q$ is a group.

Conversely, if $Q$ is a group then for all $u,v\in W$ the product
\begin{displaymath}
    (1,0)(0,u)\cdot (1,v) = (1,u)(1,v) = (0,u+v+q(u)+h(u,v))
\end{displaymath}
is equal to
\begin{displaymath}
    (1,0)\cdot (0,u)(1,v) = (1,0)(1,u+v+q(u)) = (0,u+v+q(u)),
\end{displaymath}
so $h=0$ and $q$ is linear.

(v) On $X=0\times W$, the multiplication formula \eqref{Eq:Example} reduces to
\begin{displaymath}
	(0,u)(0,v) = (0,u+v),
\end{displaymath}
and $X$ is therefore isomorphic to the commutative group $W$. The mapping $f:Q\to F$, $(i,u)\mapsto i$ is clearly a homomorphism with kernel equal to $X$, which shows that $X$ is a normal subloop of $Q$.

(vi) Suppose that $X$ induces an abelian congruence of $Q$. By Theorem \ref{Th:AbExt}, $Q$ is an abelian extension of $X=W$ by $F$, and there exist $\varphi,\psi:F\times F\to\aut(X)$ and $\theta:F\times F\to X$ such that $\varphi_{i,0}=\psi_{0,i}=\id_X$, $\theta_{i,0}=\theta_{0,i}=0$ and
\begin{displaymath}
	(i,u)(j,v) = (i+j,\varphi_{i,j}(u)+\psi_{i,j}(v)+\theta_{i,j}).
\end{displaymath}
Since $(0,u)(1,v) = (1,u+v+q(u))$ by \eqref{Eq:Example} and $\varphi_{0,1}(u) + \psi_{0,1}(v)+\theta_{0,1} = \varphi_{0,1}(u) + v$, we must have $\varphi_{0,1}(u)=u+q(u)$. As $\varphi_{0,1}$ is an automorphism of $X$, we deduce $(u+v)+q(u+v) = (u+q(u))+(v+q(v))$ for all $u,v\in W$. This shows that $q$ is linear and $Q$ is a group by (iii).
\end{proof}

The nonassociative loops $\mathcal Q(F,B,W,\overline q)$ afforded by Proposition \ref{Pr:Example} demonstrate that it is possible for a Moufang loop to be congruence solvable (even centrally nilpotent) and yet posses an abelian normal subloop that does not induce an abelian congruence.

\begin{example}
Smallest examples of interest are obtained from Proposition \ref{Pr:Example} when $\overline q$ is a nonlinear quadratic form on a vector space $W/B$ of dimension two, which forces (up to isomorphism) either $F=B=0\times 0\times  C_2\le W= C_2\times C_2\times C_2$ or $F=B=0\times  C_2\le W= C_2\times C_4$. We then obtain a Moufang loop $Q$ of order $16$ with an abelian normal subloop $X$ (isomorphic to $ C_2\times C_2\times C_2$ or to $ C_2\times C_4$) that does not induce an abelian congruence of $Q$. This covers the example that was found in \cite{StaVojComm} by an exhaustive search.
\end{example}

\section{Moufang loops in which classical solvability and congruence solvability coincide}\label{Sc:Coincide}

We start with the following easy fact:

\begin{lem}\label{Lm:Comm2Div}
Let $X$ be a $2$-divisible commutative group. Then every semiautomorphism of $X$ is an automorphism of $X$.
\end{lem}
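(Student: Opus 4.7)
The plan is to translate the semiautomorphism identity into additive notation and exploit $2$-divisibility to deduce additivity. Writing $X$ additively (so the neutral element is $0$), a semiautomorphism $f$ satisfies $f(0)=0$ and
\[
f(2x+y) = 2f(x) + f(y)
\]
for all $x,y\in X$, since the commutativity of $X$ collapses $x\cdot yx$ to $2x+y$ and $f(x)\cdot f(y)f(x)$ to $2f(x)+f(y)$.

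The first observation I would make is that setting $y=0$ yields $f(2x)=2f(x)$, i.e., $f$ commutes with the doubling map. Now given arbitrary $a,b\in X$, I would use $2$-divisibility to pick $x\in X$ with $2x=a$, and then compute
\[
f(a+b) = f(2x+b) = 2f(x) + f(b) = f(2x) + f(b) = f(a) + f(b).
\]
Thus $f$ is an endomorphism of $(X,+)$, and since $f$ is already a bijection by hypothesis, it is an automorphism of $X$.

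I do not anticipate any obstacle; the argument is a two-line computation, and $2$-divisibility is used exactly once, to represent an arbitrary element as $2x$ so that the semiautomorphism identity can be invoked.
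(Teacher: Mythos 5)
Your proof is correct and is essentially the paper's own argument written additively: both use $2$-divisibility to write an arbitrary element as a square (double), apply the semiautomorphism identity $f(u\cdot yu)=f(u)\cdot f(y)f(u)$, and absorb $f(u)^2=f(u^2)$ using the power-compatibility of semiautomorphisms (your $f(2x)=2f(x)$ step, obtained by setting $y=0$, is the same fact the paper quotes as $f(x^n)=f(x)^n$). No differences worth noting.
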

\begin{proof}
Let $f$ be a semiautomorphism of $X$. Recall that $f(x^n)=f(x)^n$ for every $x\in X$ and $n\in\mathbb Z$. Let $x,y\in X$. By $2$-divisibility, there is $u\in X$ such that $x=u^2$. Then $f(xy) = f(u^2y) = f(uyu) = f(u)f(y)f(u) = f(u)^2f(y) = f(u^2)f(y) = f(x)f(y)$.
\end{proof}

\begin{lem}\label{Lm:Better}
Let $Q$ be a Moufang loop and $X$ a $2$-divisible abelian normal subgroup of $Q$. Then every inner mapping of $Q$ restricts to an automorphism of $X$.
\end{lem}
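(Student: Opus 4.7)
The plan is to combine three facts already recorded in Section \ref{Sc:Background}: (a) every inner mapping of a Moufang loop $Q$ admits a companion making it a pseudoautomorphism; (b) every pseudoautomorphism of a Moufang loop is a semiautomorphism; and (c) Lemma \ref{Lm:Comm2Div}, which upgrades semiautomorphisms of a $2$-divisible commutative group to automorphisms. The only thing to verify in between is that the restriction of a semiautomorphism of $Q$ to a normal, $1$-fixing subloop is again a semiautomorphism of the smaller loop, which is immediate from the definition.

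In detail, I would let $\sigma \in \inn(Q)$ be an arbitrary inner mapping. By normality of $X$, $\sigma$ restricts to a bijection $\sigma|_X : X \to X$ with $\sigma|_X(1) = 1$. By (a) and (b), there exists a companion $c \in Q$ such that $(c, \sigma) \in \lps(Q)$ and, consequently, $\sigma$ is a semiautomorphism of $Q$, meaning
\begin{displaymath}
    \sigma(x \cdot yx) = \sigma(x)\cdot \sigma(y)\sigma(x)
\end{displaymath}
holds for all $x, y \in Q$, and in particular for all $x, y \in X$. Because $X$ is a subloop, $x \cdot yx$ lies in $X$ whenever $x, y \in X$, so this identity witnesses that $\sigma|_X$ is a semiautomorphism of the loop $X$ in its own right. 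Since $X$ is $2$-divisible and a commutative group, Lemma \ref{Lm:Comm2Div} gives $\sigma|_X \in \aut(X)$, as required.

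There is no real obstacle here beyond assembling the previously established pieces. The only point worth underlining is that the hypothesis ``$X$ is an abelian normal subgroup'' is used in two distinct ways: normality provides $\sigma(X) = X$ (so the restriction is defined), while the commutative group structure together with $2$-divisibility is precisely what Lemma \ref{Lm:Comm2Div} needs to promote ``semi'' to ``full'' automorphism. Notably, no case analysis by the three types $T_u$, $L_{u,v}$, $R_{u,v}$ is required, since the Moufang-theoretic fact that every inner mapping is a pseudoautomorphism handles all of them uniformly.
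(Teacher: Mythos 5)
Your proposal is correct and follows exactly the paper's argument: inner mappings of a Moufang loop are pseudoautomorphisms, hence semiautomorphisms, and Lemma \ref{Lm:Comm2Div} promotes the restriction to the $2$-divisible abelian normal subgroup $X$ to an automorphism. The extra detail you supply (normality making the restriction well-defined, and the semiautomorphism identity passing to the subloop) is left implicit in the paper but is exactly the right justification.
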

\begin{proof}
Let $f\in\inn(Q)$. By the introductory remarks in Subsection \ref{Ss:Moufang}, $f$ is a pseudoautomorphism of $Q$ and hence a semiautomorphism of $Q$. By Lemma \ref{Lm:Comm2Div}, the restriction $f|_X$ of $f$ to $X$ is an automorphism of $X$.
\end{proof}

\begin{thm}\label{Th:AbelianCongruence}
Let $Q$ be a $3$-divisible Moufang loop and let $X$ be a $2$-divisible abelian normal subgroup of $Q$. Then the congruence $\{aX:a\in Q\}$ on $Q$ induced by $X$ is an abelian congruence of $Q$.
\end{thm}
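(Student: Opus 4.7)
I plan to exhibit $Q$ as an abelian extension of $X$ by $Q/X$ and then conclude via Theorem~\ref{Th:AbExt}. The key preliminary identity, valid for every $u\in Q$ and all $x,y\in X$, is
\begin{equation}\label{Eq:PlanStar}
(xu^{-1})(uy)=xy.
\end{equation}
To establish \eqref{Eq:PlanStar}, I will use $3$-divisibility of $Q$ to write $u=a^3$. Proposition~\ref{Pr:m2} yields $xu^{-1}\cdot uy=T_a^{-1}(T_a(x)T_a(y))$, and by Lemma~\ref{Lm:Better} the map $T_a$ restricts to an automorphism of $X$ (this is the only place where $2$-divisibility of $X$ is used). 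Hence $T_a(x)T_a(y)=T_a(xy)$ and the right-hand side collapses to $xy$.

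Next fix a transversal $U\subseteq Q$ to $X$ with $1\in U$. For $r,s\in U$, let $u_{r,s}$ be the unique element of $U\cap(rs)X$, set $\theta_{r,s}=u_{r,s}\backslash(rs)\in X$, define $\psi_{r,s}=L_{r,s}|_X$, and put $\varphi_{r,s}(x)=\bigl(u_{r,s}\backslash(rx\cdot s)\bigr)\theta_{r,s}^{-1}\in X$, with all arithmetic performed in the abelian group $X$. Lemma~\ref{Lm:Better} immediately gives $\psi_{r,s}\in\aut(X)$. The heart of the argument is then to verify, by a direct computation that combines \eqref{Eq:PlanStar} with the Moufang autotopism $(L_a,R_a,L_aR_a)\in\atp(Q)$ (equivalently, the Moufang identity $(ab)(ca)=a(bc)a$), both that $\varphi_{r,s}\in\aut(X)$ and that the abelian-extension multiplication rule
\[
(rx)(sy)=u_{r,s}\bigl(\varphi_{r,s}(x)\psi_{r,s}(y)\theta_{r,s}\bigr)
\]
holds for all $x,y\in X$. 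An application of Theorem~\ref{Th:AbExt} then gives $[X,X]_Q=1$, which is precisely the statement that the congruence induced by $X$ on $Q$ is abelian.

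The main obstacle lies exactly in the computation just described. In the nuclear analogue of Lemma~\ref{Lm:GrAb}, elements of $X$ can be freely moved across products of $Q$, making the corresponding verification routine; here that luxury is unavailable, and \eqref{Eq:PlanStar} must take its place as a substitute, partial associativity law for triples containing elements of $X$. A careful interleaving of \eqref{Eq:PlanStar} with the Moufang autotopism identities is what should deliver both the homomorphism property of $\varphi_{r,s}$ and the desired cocycle formula.
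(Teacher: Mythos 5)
Your overall strategy coincides with the paper's: exhibit $Q$ as an abelian extension of $X$ by $Q/X$ and invoke Theorem \ref{Th:AbExt}, with the identity $xu^{-1}\cdot uy=xy$ (for $u\in Q$, $x,y\in X$) as the engine. Your derivation of that identity from $3$-divisibility, Proposition \ref{Pr:m2} and Lemma \ref{Lm:Better} is exactly the paper's, and it is indeed the one place where both divisibility hypotheses enter. But the proposal stops where the real work begins. The paper's proof is essentially one sustained computation that rewrites $rx\cdot sy$ into the form $u_{r,s}\cdot\varphi_{r,s}(x)\psi_{r,s}(y)\theta_{r,s}$ by threading $x$ and $y$ through four explicit restrictions of inner mappings, namely $f_1=(T_s)|_X$, $f_2=(L_{s^{-1}r}^{-1}L_s^{-1}L_r)|_X$, $f_3=(L_{rs}^{-1}L_rL_s)|_X$ and $f_4=(L_z^{-1}L_{u_{r,s}}^{-1}L_{rs})|_X$ (where $rs=u_{r,s}z$), each an automorphism of $X$ by Lemma \ref{Lm:Better}; it then reads off $\varphi_{r,s}=f_4f_3f_1^{-1}f_2$ and $\psi_{r,s}=f_4f_3$, so that the automorphism property is automatic. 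You instead define $\varphi_{r,s}$ by the formula forced by setting $y=1$ and must then prove it is a homomorphism of $X$ by an unspecified ``direct computation''; that computation is precisely the substance of the proof, and asserting that ``a careful interleaving \dots\ should deliver'' it leaves the central step unexecuted.

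There is also a concrete error in the setup. Setting $x=1$ in the required rule forces $\psi_{r,s}(y)=(u_{r,s}\ldiv(r\cdot sy))\theta_{r,s}^{-1}$. Since $r\cdot sy=rs\cdot L_{r,s}(y)=u_{r,s}z\cdot L_{r,s}(y)$ with $z=\theta_{r,s}\in X$, your choice $\psi_{r,s}=L_{r,s}|_X$ would require $u_{r,s}z\cdot w=u_{r,s}\cdot zw$ for all $w\in X$, i.e.\ that the correction factor $f_4$ restricts to the identity on $X$. That is not available in a Moufang loop, since the elements of $X$ need not be nuclear (this is exactly the luxury you correctly note is missing relative to Lemma \ref{Lm:GrAb}). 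So as written the plan both misdefines $\psi_{r,s}$ and defers the verification of the cocycle formula and of $\varphi_{r,s}\in\aut(X)$, which together constitute the proof.
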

\begin{proof}
Let $U$ be a transversal to $X$ containing $1$, let $r,s\in U$ and $x,y\in X$. There are uniquely determined $u=u_{r,s}\in U$ and $z\in X$ such that $rs=uz$. We wish to apply Theorem \ref{Th:AbExt} and hence to find $\varphi_{r,s},\psi_{r,s}\in\aut(X)$ and $\theta_{r,s}\in X$ such that $rx\cdot sy = u\cdot \varphi_{r,s}(x)\psi_{r,s}(y)\theta_{r,s}$ as in \eqref{Eq:i1}. In addition, we must verify $\varphi_{r,1}=\id_X = \psi_{1,r}$ and $\theta_{1,r}=\theta_{r,1}=1$. We will build the automorphisms in several steps. Consider
\begin{displaymath}
    f_1 = (T_s)|_X.
\end{displaymath}
By Lemma \ref{Lm:Better}, $f_1\in\aut(X)$. We have $f_1(y)s = T_s(y)s = sys^{-1}s=sy$, so
\begin{displaymath}
    rx\cdot sy = rx\cdot f_1(y)s.
\end{displaymath}
Let
\begin{displaymath}
    f_2 = (L_{s^{-1}r}^{-1}L_s^{-1}L_r)|_X.
\end{displaymath}
Since $f_2(1) = (s^{-1}r)^{-1}(s^{-1}r)=1$, $f_2$ is a restriction of an inner mapping of $Q$ to $X$. By Lemma \ref{Lm:Better}, $f_2\in\aut(X)$. Moreover, we have $s\cdot (s^{-1}r)f_2(x)=rx$. Therefore
\begin{displaymath}
    rx\cdot sy = rx\cdot f_1(y)s = (s\cdot (s^{-1}r)f_2(x))(f_1(y)s) = s( (s^{-1}r)f_2(x)\cdot f_1(y))s,
\end{displaymath}
where we have used a Moufang identity \eqref{Eq:m1} in the last step. By the identity \eqref{Eq:m2}, we have $uv\cdot w = u(u^{-1}\cdot(uv\cdot w)) = u(vu^{-1}\cdot uw)$ for any $u,v,w\in Q$. In particular, with $u=s^{-1}r$, $v=f_2(x)\in X$ and $w=f_1(y)\in X$, we obtain
\begin{displaymath}
    rx\cdot sy = s( uv\cdot w )s = s(u(vu^{-1}\cdot uw))s = su\cdot (vu^{-1}\cdot uw)s,
\end{displaymath}
where we have again used a Moufang identity in the last step. Since $Q$ is $3$-divisible, there is $a\in Q$ such that $u=a^3$. Proposition \ref{Pr:m2} then yields
\begin{displaymath}
    vu^{-1}\cdot uw = va^{-3}\cdot a^3w = T_a^{-1}(T_a(v)T_a(w)) = vw,
\end{displaymath}
where in the last step we used $(T_a)|_X\in\aut(X)$, by Lemma \ref{Lm:Better}. So far we showed
\begin{displaymath}
    rx\cdot sy = su\cdot (vw)s =  s(s^{-1}r)\cdot (vw)s = r\cdot (vw)s = r\cdot (f_2(x)f_1(y))s.
\end{displaymath}
Now, $sf_1^{-1}(x_0) = ss^{-1}x_0s = x_0s$ for any $x_0\in Q$ and thus
\begin{displaymath}
    rx\cdot sy = r\cdot (f_2(x)f_1(y))s = r\cdot sf_1^{-1}(f_2(x)f_1(y)) = r\cdot s(f_1^{-1}f_2(x)\cdot y),
\end{displaymath}
taking advantage of $f_1\in\aut(X)$. Consider
\begin{displaymath}
    f_3 = (L_{rs}^{-1}L_rL_s)|_X
\end{displaymath}
and note that $f_3(1) = (rs)^{-1}(rs)=1$, which implies $f_3\in\aut(X)$ as usual. Moreover, $rs\cdot f_3(x_0) = r\cdot sx_0$ for any $x_0\in Q$, and hence
\begin{displaymath}
    rx\cdot sy = r\cdot s(f_1^{-1}f_2(x)\cdot y) = rs\cdot f_3(f_1^{-1}f_2(x)\cdot y) = rs\cdot (f_3f_1^{-1}f_2(x)\cdot f_3(y)),
\end{displaymath}
using $f_3\in\aut(X)$. Recall that $rs=uz$ with $u\in U$, $z\in X$, and consider
\begin{displaymath}
    f_4 = (L_z^{-1}L_u^{-1}L_{rs})|_X.
\end{displaymath}
We have $f_4(1) = z^{-1}(u^{-1}(rs)) = 1$ (since $rs= uz$) and $f_4\in\aut(X)$. Moreover, $u\cdot zf_4(x_0) = rs\cdot x_0$ for any $x_0\in Q$, and thus
\begin{displaymath}
    rx\cdot sy{=}rs\cdot (f_3f_1^{-1}f_2(x)\cdot f_3(y)){=}u\cdot zf_4(f_3f_1^{-1}f_2(x)\cdot f_3(y))
	{=}u\cdot f_4f_3f_1^{-1}f_2(x)f_4f_3(y)z,
\end{displaymath}
where we have used $f_4\in\aut(X)$ and commutativity of the group $X$ in the last step. Rewriting, we have
\begin{displaymath}
    rx\cdot sy = u\cdot \varphi_{r,s}(x)\psi_{r,s}(y)\theta_{r,s},
\end{displaymath}
where $\varphi_{r,s} = f_4f_3f_1^{-1}f_2\in\aut(X)$, $\psi_{r,s} = f_4f_3\in\aut(X)$ and $\theta_{r,s} = z = u^{-1}(rs)\in X$.

If $s=1$, we observe $f_1=(T_1)|_X = \id_X$, $f_2 = (L_r^{-1}L_1^{-1}L_r)|_X = \id_X$, $f_3 = (L_r^{-1}L_rL_1)|_X = \id_X$, $u\in (r1)X\cap U = \{r\}$, $\theta_{r,1}=z=1$, $f_4 = (L_1^{-1}L_r^{-1}L_r)|_X = \id_X$ and therefore $\varphi_{r,1}=\id$. If $r=1$, we observe $f_3 = (L_s^{-1}L_1L_s)|_X = \id_X$, $u\in (1s)X\cap U = \{s\}$, $\theta_{1,s}=z=1$, $f_4 = (L_1^{-1}L_s^{-1}L_s)|_X = \id_X$ and therefore $\psi_{1,s}=\id_X$.
\end{proof}

Recall the power mapping $h_d$ of \eqref{Eq:dPower} and note that $h_6=h_3h_2=h_2h_3$ in a power associative loop. Hence a power associative loop is $6$-divisible if and only if it is $2$-divisible and $3$-divisible. We therefore deduce from Theorem \ref{Th:AbelianCongruence}:

\begin{cor}\label{Cr:AbelianCongruence}
Let $Q$ be a $6$-divisible Moufang loop and let $X$ be an abelian normal subloop of $Q$. Then $X$ induces an abelian congruence of $Q$.
\end{cor}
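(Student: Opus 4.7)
The corollary will be deduced from Theorem~\ref{Th:AbelianCongruence}. Since a $6$-divisible Moufang loop is automatically $3$-divisible (by the remark immediately preceding the corollary), the only remaining hypothesis of that theorem is that $X$ be $2$-divisible. The entire task therefore reduces to showing that any abelian normal subloop of a $6$-divisible Moufang loop is itself $2$-divisible.

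Fix $x\in X$. Using $6$-divisibility of $Q$, pick $w\in Q$ with $w^{6}=x$; then $y:=w^{3}\in Q$ satisfies $y^{2}=x$, so it suffices to prove $y\in X$. Pass to the quotient $Q/X$, which is again a $6$-divisible Moufang loop because $d$-divisibility is preserved by homomorphic images. The image $\bar y\in Q/X$ satisfies $\bar y^{2}=\bar 1$, so the claim reduces to the statement that a $6$-divisible Moufang loop contains no nontrivial element of order dividing~$2$. In the finite case this is immediate from Proposition~\ref{Pr:CoprimeUD}: the order $|Q/X|$ must be coprime to~$6$, and by the elementwise Lagrange property (Lemma~\ref{Lm:ElementwiseLagrange}) the order of $\bar y$ divides $|Q/X|$ and hence is coprime to~$6$; combined with $\bar y^{2}=\bar 1$, this forces $\bar y=\bar 1$. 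Consequently $y\in X$, so $X$ is $2$-divisible, and Theorem~\ref{Th:AbelianCongruence} concludes the proof.

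The step I expect to be the main obstacle is passing from the finite to the general (possibly infinite) case, since Proposition~\ref{Pr:CoprimeUD} is stated only for finite Moufang loops. One natural workaround is to restrict attention to the cyclic subgroup $\langle w\rangle\le Q$, which is a cyclic group by diassociativity of Moufang loops. Its intersection $\langle w\rangle\cap X$ is a subgroup of index dividing~$6$ in $\langle w\rangle$, so the induced ``index of $\bar w$ in $\langle w\rangle / (\langle w\rangle\cap X)$'' lies in $\{1,2,3,6\}$; a short case analysis, combined with the freedom afforded by $6$-divisibility to replace $w$ by any other sixth root of~$x$, should in each case produce an element of $X$ squaring to~$x$. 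This would remove the implicit finiteness from the finite-case argument above and complete the proof in full generality.
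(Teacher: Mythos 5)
Your reduction to Theorem~\ref{Th:AbelianCongruence} is exactly the paper's route, and you have correctly isolated the one point that needs an argument: the theorem requires $X$, not $Q$, to be $2$-divisible, whereas the hypothesis only gives $6$-divisibility of $Q$. (The paper's own deduction is a one-liner appealing to ``$6$-divisible iff $2$- and $3$-divisible,'' which concerns $Q$ and passes over this step silently, so you are being more careful than the source here.) Your finite-case argument is correct; it can be streamlined without mentioning $Q/X$ at all: by Lemma~\ref{Lm:dPowers}, a finite $6$-divisible $Q$ has no element of order $2$, hence neither does $X$, so $h_2$ is injective on the finite set $X$ and therefore surjective, i.e., $X$ is $2$-divisible.

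The workaround you sketch for the infinite case, however, cannot be completed, because the intermediate claim it aims at is false: an abelian normal subloop of a ($6$-divisible, even uniquely $6$-divisible) Moufang loop need not be $2$-divisible. Take $Q=(\mathbb{Q},+)$, an associative and hence Moufang loop in which $h_6$ is bijective, and $X=\mathbb{Z}$. The element $1\in X$ has the unique sixth root $1/6$ in $Q$, its cube is $1/2\notin X$, and no element of $X$ doubles to $1$; so no case analysis over sixth roots of $x$ can produce an element of $X$ squaring to $x$. (In this example the corollary itself still holds by Lemma~\ref{Lm:GrAb}, since $Q$ is a group, but the route through $2$-divisibility of $X$ is closed.) As written, your proof therefore establishes the corollary only for finite $Q$; in the infinite case one must either assume outright that $X$ is $2$-divisible --- which is just Theorem~\ref{Th:AbelianCongruence} again --- or find a genuinely different argument. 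This is also precisely the point at which the paper's own deduction is thinnest.
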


Since in the two definitions of solvability (Definition \ref{Df:Solv}), the only difference is whether the quotients $Q_i/Q_{i+1}$ are merely commutative groups or whether they induce an abelian congruence, we have:

\begin{cor}\label{Cr:Coincide}
Let $Q$ be a $6$-divisible Moufang loop. Then $Q$ is solvable if and only if it is congruence solvable.
\end{cor}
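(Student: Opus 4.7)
The forward direction is immediate from the definitions: a congruence solvable series is automatically a classically solvable series, since any normal subloop that is abelian in the ambient loop is in particular a commutative group. Hence every congruence solvable loop is classically solvable.

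For the converse, the plan is to take an arbitrary classically solvable series $Q = Q_0 \ge Q_1 \ge \cdots \ge Q_n = 1$ for $Q$ and to verify that it already satisfies the stronger requirement of being a congruence solvable series, by applying Corollary \ref{Cr:AbelianCongruence} inside each quotient $Q/Q_{i+1}$. For this to work, I first need to observe that each such quotient is itself a $6$-divisible Moufang loop. The Moufang property is preserved under homomorphic images because it is equational. For $6$-divisibility, let $\pi:Q\to Q/Q_{i+1}$ be the canonical projection; given $\pi(a)\in Q/Q_{i+1}$, $6$-divisibility of $Q$ yields $b\in Q$ with $b^6=a$, and then $\pi(b)^6=\pi(b^6)=\pi(a)$.

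Next, by the Correspondence Theorem, $Q_i/Q_{i+1}$ is a normal subloop of $Q/Q_{i+1}$, and by the classical solvability hypothesis it is a commutative group, so it is an abelian normal subloop of $Q/Q_{i+1}$. Corollary \ref{Cr:AbelianCongruence}, applied to the $6$-divisible Moufang loop $Q/Q_{i+1}$ with its abelian normal subloop $Q_i/Q_{i+1}$, then yields that $Q_i/Q_{i+1}$ induces an abelian congruence of $Q/Q_{i+1}$, that is, $Q_i/Q_{i+1}$ is abelian in $Q/Q_{i+1}$. Since this holds for every $0\le i<n$, the original series is a congruence solvable series for $Q$, so $Q$ is congruence solvable.

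There is no substantive obstacle here: all of the genuine work has already been done in Theorem \ref{Th:AbelianCongruence} and Corollary \ref{Cr:AbelianCongruence}. The only ancillary verifications needed are that the Moufang identities and $6$-divisibility descend to quotients, both of which are routine.
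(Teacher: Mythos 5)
Your proof is correct and follows exactly the paper's route: the paper derives the corollary immediately from Corollary \ref{Cr:AbelianCongruence} by observing that the two solvability definitions differ only in whether each factor $Q_i/Q_{i+1}$ is merely a commutative group or induces an abelian congruence of $Q/Q_{i+1}$. Your additional checks that the Moufang identities and $6$-divisibility descend to quotients are the right routine verifications, which the paper leaves implicit.
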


\section{Moufang loops of odd order are congruence solvable}\label{Sc:Odd}

We proceed to show that Moufang loops of odd order are congruence solvable. This strengthens Glauberman's Odd Order Theorem for Moufang loops \cite[Theorem 16]{GlaubermanII}:

\begin{thm}[Glauberman]\label{Th:Glauberman}
A Moufang loop of odd order is classically solvable.
\end{thm}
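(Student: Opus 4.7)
The plan is to reduce the problem to the Feit--Thompson odd order theorem for groups by passing through a group-theoretic avatar of the Moufang loop. The most natural such avatar is the group $G$ with triality associated to $Q$ via Doro's construction: $G$ carries an outer automorphism $\sigma$ of order three, and the structure of $Q$ is encoded by the action of $\sigma$ on a distinguished conjugacy class in $G$. I would arrange the construction so that if $|Q|$ is odd then $|G|$ is odd, possibly after discarding a controlled factor coming from $\sigma$ itself, using the Cauchy property for $p = 3$ from Theorem \ref{Th:Cauchy} to verify that $3 \nmid |Q|$ whenever $3 \nmid |G|$ and vice versa.

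Once $G$ is known to be a group of odd order, Feit--Thompson yields a classical solvability series $G = G_0 > G_1 > \cdots > G_n = 1$ with each factor $G_i/G_{i+1}$ a commutative group. The next step is to replace each $G_i$ by its $\sigma$-invariant core $G_i^* = \bigcap_{j=0}^{2} \sigma^j(G_i)$ and to check that the refined series $(G_i^*)$ is still normal with abelian factors, since $\sigma$ commutes with taking commutator subgroups and Schreier-type refinements preserve abelian quotients. Because $\sigma$-invariant normal subgroups of $G$ correspond under Doro's dictionary to normal subloops of $Q$, intersecting the series $(G_i^*)$ with the distinguished coset and translating back would then produce a classical solvability series for $Q$ in the sense of Definition \ref{Df:Solv}.

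The main obstacle is the faithfulness of the correspondence at the level of quotients: one must verify that a commutative group quotient $G_i^*/G_{i+1}^*$ descends to a commutative group quotient of the corresponding piece of $Q$, and not merely to an abelian loop in the categorical sense discussed in Subsection \ref{Ss:Abelianess}. A cleaner but less elementary alternative is to argue by induction on $|Q|$: if $Q$ admits a proper nontrivial normal subloop $N$, then both $N$ and $Q/N$ have odd order strictly below $|Q|$, and by induction one concatenates their classical solvability series to obtain one for $Q$; the residual case is that $Q$ is simple of odd order, which must be excluded directly. In modern terms, Liebeck's classification \cite{Liebeck} of finite simple Moufang loops lists the nonassociative ones as the Paige loops $M(q)$, all of even order, so a simple Moufang loop of odd order must be a cyclic group of prime order by Feit--Thompson; but invoking that classification to prove an elementary-sounding odd-order statement would be heavy machinery, and the honest difficulty lies in giving an elementary argument that no nonassociative simple Moufang loop has odd order.
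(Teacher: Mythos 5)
The paper does not actually prove this statement: it is quoted from Glauberman \cite[Theorem 16]{GlaubermanII} as the known result that Theorem \ref{Th:MoufOdd} strengthens, so there is no internal proof to compare against. Judged on its own, your proposal is not yet a proof. In the triality route, the step ``arrange the construction so that if $|Q|$ is odd then $|G|$ is odd'' is the crux, not bookkeeping: the group with triality built from $Q$ sits over $\mlt(Q)$, and for a general loop of odd order $\mlt(Q)$ can perfectly well have even order. That $\mlt(Q)$ has odd order when $Q$ is a Moufang loop of odd order is itself one of the substantive facts Glauberman establishes on the way to solvability, so assuming it here comes close to assuming what is to be proved. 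The descent of the derived series of $G$ back to a normal series of $Q$ with commutative-group factors is also only gestured at; you would need the exact dictionary between $\sigma$-invariant normal subgroups of $G$ and normal subloops of $Q$, together with a check that abelian quotients upstairs give commutative-group quotients downstairs (only classical solvability is needed here, so the congruence-theoretic subtlety of Subsection \ref{Ss:Abelianess} does not arise, but the check is still not done).

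Your second route --- induction on $|Q|$ with concatenation of series for $N$ and $Q/N$ (legitimate, since Glauberman's definition only requires $Q_{i+1}\unlhd Q_i$), reducing to the simple case --- is logically complete only if one imports Liebeck's classification \cite{Liebeck} to exclude nonassociative simple Moufang loops of odd order, plus Feit--Thompson \cite{FeitThompson} for the associative case. You rightly flag that this rests on the classification of finite simple groups and is disproportionate, but you do not supply the elementary replacement; excluding odd-order nonassociative simple Moufang loops without the classification is essentially the entire content of Glauberman's theorem. His actual argument takes neither of your routes: it exploits the unique $2$-divisibility of odd-order Moufang loops and the associated $B$-loop machinery of \emph{On loops of odd order I, II}. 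As written, the proposal locates the difficulty honestly but does not resolve it.
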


Our proof is based on the recent, deep result of Cs\"org\H{o} \cite[Theorem 5.1]{Csorgo}:

\begin{thm}[Cs\"org\H{o}]\label{Th:Csorgo}
A nontrivial Moufang loop of odd order has a nontrivial nucleus.
\end{thm}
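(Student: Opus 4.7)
This is the main theorem of \cite{Csorgo}; its proof is deep and lies outside the background set up in the preceding sections. I will sketch the most natural first attempt one might make with the tools already at hand, and identify where that attempt must break down, so as to indicate why a genuinely new idea---ultimately, Cs\"org\H{o}'s---is needed.

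The natural approach is to argue by contradiction: assume $Q$ is a nontrivial Moufang loop of odd order with $\nuc(Q)=1$. Because $|Q|$ is odd, Lemma \ref{Lm:ElementwiseLagrange}, Theorem \ref{Th:Cauchy}, and Lemma \ref{Lm:Coprime} together imply that $Q$ is uniquely $2$-divisible. Glauberman's Odd Order Theorem (Theorem \ref{Th:Glauberman}) gives that $Q$ is classically solvable, so at the bottom of a classically solvable series for $Q$ we find a nontrivial abelian---and in particular $2$-divisible---normal subloop $A$ of $Q$. By Lemma \ref{Lm:Better}, every inner mapping of $Q$ restricts to an automorphism of $A$, yielding a homomorphism $\inn(Q)\to\aut(A)$ whose image is constrained by the pseudoautomorphism identity \eqref{Eq:InnPseudo} and by the associativity law of Proposition \ref{Pr:m2}.

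The hard part---and the reason Cs\"org\H{o}'s theorem is genuinely deep---is twofold. First, since $|Q|$ need not be coprime to $3$, Theorem \ref{Th:AbelianCongruence} does not directly apply, so one cannot promote $A$ to an abelian congruence of $Q$ and run the kind of extension analysis used in Section \ref{Sc:Coincide}. Second, and more fundamentally, even centrally nilpotent Moufang loops can have abelian normal subloops that are not nuclear, as exhibited by Proposition \ref{Pr:Example}; hence the odd-order hypothesis must be used in a structural way that goes beyond divisibility and beyond mere representation theory on $A$. Cs\"org\H{o}'s argument in \cite{Csorgo} uses detailed multiplication-group machinery and the theory of connected transversals to produce a nontrivial element of $Q$ that associates with everything in $Q$; I would defer to that paper for the complete proof.
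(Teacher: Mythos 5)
The paper does not prove this statement at all: it is imported verbatim as an external black box, cited from \cite{Csorgo} (Theorem 5.1 there), so your deferral to that reference is exactly the paper's own treatment. Your surrounding commentary on why the tools of Sections \ref{Sc:Background}--\ref{Sc:Coincide} do not suffice is accurate but is not load-bearing, and there is no internal proof to compare it against.
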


A subloop $X$ of a loop $Q$ is \emph{characteristic} if $f(X)=X$ for every $f\in\aut(Q)$. Unlike in groups, a characteristic subloop of $Q$ is not necessarily a normal subloop of $Q$. It is clear from the definition of the nucleus that $\nuc(Q)$ is a characteristic subloop of $Q$.

\begin{thm}\label{Th:MoufOdd}
A Moufang loop of odd order is congruence solvable.
\end{thm}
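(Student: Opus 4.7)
The plan is to induct on $|Q|$, with the inductive step consisting of producing a nontrivial normal subloop $X \unlhd Q$ that induces an abelian congruence of $Q$. Once such an $X$ is in hand, the quotient $Q/X$ is a Moufang loop of strictly smaller odd order, so by the induction hypothesis it admits a congruence solvable series; lifting this series to $Q$ via the correspondence theorem and appending the final step $X > 1$ (in which $X/1 = X$ is abelian in $Q/1 = Q$ by choice) yields a congruence solvable series for $Q$, exactly as in the second half of the proof of Proposition \ref{Pr:Solv}.

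To produce $X$, I would invoke Cs\"org\H{o}'s Theorem \ref{Th:Csorgo}: the subloop $N = \nuc(Q)$ is nontrivial. Since the four nuclei of a Moufang loop coincide and form a normal subloop (Subsection \ref{Ss:Moufang}), $N \unlhd Q$, and since $N$ is associative, $N$ is a group of odd order, hence solvable as a group, either by Feit--Thompson or by applying Glauberman's Theorem \ref{Th:Glauberman} to $N$. Take $X$ to be the last nontrivial term of the derived series of the group $N$; then $X$ is a nontrivial abelian group and a characteristic subgroup of $N$. By Corollary \ref{Cr:NucAut}, every inner mapping of $Q$ restricts to an automorphism of $N$, and such automorphisms preserve the characteristic subgroup $X$, so $X$ is $\inn(Q)$-invariant and therefore normal in $Q$. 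Because $X \le N = \nuc(Q)$ and $X$ is an abelian normal subloop of $Q$, Lemma \ref{Lm:GrAb} guarantees that $X$ induces an abelian congruence of $Q$, completing the inductive step.

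The main obstacle is Cs\"org\H{o}'s theorem itself: without a nontrivial nucleus there is no route through the preceding material to upgrade \emph{abelian normal subloop} to \emph{induces an abelian congruence}, since Lemma \ref{Lm:GrAb} is precisely the nuclear criterion that makes the passage possible. Everything else is routine organization --- the coincidence of the nuclei in Moufang loops, the fact that inner mappings of $Q$ act on $N$ by group automorphisms, and the classical observation that characteristic subgroups of a normal associative subloop remain normal in the ambient loop.
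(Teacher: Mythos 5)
Your proposal is correct and follows essentially the same route as the paper: Cs\"org\H{o}'s theorem to get a nontrivial nucleus $N$, Feit--Thompson for solvability of $N$, extraction of an abelian characteristic subgroup $X$ of $N$ (you take the last nontrivial term of the derived series, the paper takes a minimal characteristic subgroup and shows it is abelian via its derived subgroup --- the same idea), normality of $X$ in $Q$ via Corollary \ref{Cr:NucAut}, Lemma \ref{Lm:GrAb} for abelianess of the induced congruence, and induction on $Q/X$ organized through Proposition \ref{Pr:Solv}. No gaps.
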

\begin{proof}
We proceed by induction on the order of the Moufang loop $Q$. There is nothing to prove when $Q=1$, so suppose that $Q$ is nontrivial. In every Moufang loop, the nucleus is a normal subloop. By Theorem \ref{Th:Csorgo}, $1<N=\nuc(Q)\unlhd Q$. Since $N$ is a group of odd order, it is solvable by the Odd Order Theorem for groups \cite{FeitThompson}.

Let $X$ be a minimal characteristic subgroup of $N$. Let $f\in\inn(Q)$. By Corollary \ref{Cr:NucAut}, $f$ restricts to an automorphism of $N$. Since $X$ is characteristic in $N$, $f(X)=X$. Hence $X\unlhd Q$. A variation on a standard group-theoretic argument, cf.~\cite[Theorem 5.24]{Rotman}, now shows that $X$ is an abelian group. (Consider the derived subgroup $X'$ of the solvable group $X$.)

Altogether, we have shown that $X$ is an abelian normal subgroup of $Q$ and $1<X\le\nuc(Q)$. By Lemma \ref{Lm:GrAb}, $X$ induces an abelian congruence of $Q$. By Theorem \ref{Th:AbExt}, $Q$ is an abelian extension of $X$ by $Q/X$. By the induction assumption, $Q/X$ is congruence solvable. By Proposition \ref{Pr:Solv}, $Q/X$ is an iterated abelian extension, hence $Q$ is an iterated abelian extension, and $Q$ is congruence solvable by Proposition \ref{Pr:Solv} again.
\end{proof}

Note that Theorem \ref{Th:MoufOdd} implies the finitary version of Corollary \ref{Cr:Coincide}. Indeed, if $Q$ is a finite $6$-divisible Moufang loop then it is of odd order by Proposition \ref{Pr:CoprimeUD}, congruence solvable by Theorem \ref{Th:MoufOdd} and thus also classically solvable.

\section{The Cauchy property for $p=3$ in Moufang loops}\label{Sc:Cauchy3}

Let us give another elementary proof of the Cauchy property for $p=3$ in Moufang loops. We are mostly interested in some of the intermediate results, e.g., Lemma \ref{Lm:Extend}. The ideas presented here overlap substantially with those of Doro \cite{Doro}, Glauberman \cite{GlaubermanII} and Hall \cite[Chapter 13]{Hall}.

We start with a lemma motivated by triality for Moufang loops. Let
\begin{displaymath}
	\com(Q)=\{x\in Q:xy=yx\text{ for all }y\in Q\}
\end{displaymath}
be the \emph{commutant} of $Q$. In addition to the already introduced bijections $L_x$, $R_x$ and $T_x = R_x^{-1}L_x$, consider also $M_x = R_xL_x$.  (Glauberman uses $P_x$ for $R_xL_x$, while Doro and Hall use $P_x$ for $R_x^{-1}L_x^{-1}$, so it seems prudent to introduce $M_x$ to avoid confusion.) In diassociative loops, we have $L_x^{n} = L_{x^n}$, $M_x = L_xR_x$, etc.

\begin{lem}\label{Lm:TrialityMap}
Let $Q$ be a diassociative loop. Then a mapping
\begin{displaymath}
	\rho:\{L_x,R_x,L_x^{-1},R_x^{-1}:x\in Q\}\to \mlt(Q)
\end{displaymath}
is well-defined by $\rho(L_x)=R_x$, $\rho(L_x^{-1}) = R_x^{-1}$, $\rho(R_x) = M_x^{-1}$ and $\rho(R_x^{-1}) = M_x$ if and only if $x^3=1$ for every $x\in\com(Q)$.
\end{lem}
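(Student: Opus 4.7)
The plan is to treat well-definedness of $\sigma$ as a purely combinatorial consistency question: which labels in the index set $\{L_x,R_x,L_x^{-1},R_x^{-1}:x\in Q\}$ designate the same permutation of $Q$, and when two labels collide, do the prescribed values of $\sigma$ agree?

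First I would collapse the domain. In a diassociative loop $\langle x,y\rangle$ is a group, so $L_x^{-1}=L_{x^{-1}}$ and $R_x^{-1}=R_{x^{-1}}$; the flexible law (which holds in any diassociative loop) gives $L_xR_x=R_xL_x=M_x$, and applied at $x^{-1}$ yields $M_{x^{-1}}=L_x^{-1}R_x^{-1}=(L_xR_x)^{-1}=M_x^{-1}$. The index set therefore equals $\{L_x:x\in Q\}\cup\{R_x:x\in Q\}$. A short check shows that the four assignment rules are mutually compatible on this reduction: e.g.\ the rule $\sigma(R_x^{-1})=M_x$ and the rule $\sigma(R_{x^{-1}})=M_{x^{-1}}^{-1}$ agree because $M_{x^{-1}}^{-1}=M_x$. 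Hence $\sigma$ is well-defined iff the two rules $\sigma(L_x)=R_x$ and $\sigma(R_x)=M_x^{-1}$ never clash.

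Next I would classify the collisions between $L$- and $R$-labels. Evaluating at $1$, both $L_x=L_y$ and $R_x=R_y$ force $x=y$, and a cross-collision $L_x=R_y$ forces $x=y$ together with $xz=zx$ for all $z$, i.e.\ $x\in\com(Q)$. Thus the only nontrivial identification is $L_x=R_x$ with $x\in\com(Q)$, and the sole consistency requirement of $\sigma$ is that, for every $x\in\com(Q)$, one has $R_x=M_x^{-1}$.

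Finally I would convert this equation into the arithmetic condition. With $x\in\com(Q)$, diassociativity and $L_x=R_x$ give $M_x=L_xR_x=R_xR_x=R_{x^2}$, so $M_x^{-1}=R_{x^{-2}}$. Hence $R_x=M_x^{-1}$ becomes $R_x=R_{x^{-2}}$, which, evaluating at $1$, is equivalent to $x=x^{-2}$, i.e.\ $x^3=1$. Both implications of the lemma follow at once: if $x^3=1$ holds for every $x\in\com(Q)$ the scheme is globally consistent, while if $\sigma$ is well-defined then choosing any $x\in\com(Q)$ and reading off the constraint forces $x^3=1$.

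The main obstacle is psychological rather than technical: one must resist the urge to go through a sixteen-case check of pairwise collisions in $\{L_x,R_x,L_x^{-1},R_x^{-1}\}^2$, and instead first use diassociativity and flexibility to collapse the label set, after which the only genuine collision is $L_x=R_x$ with $x\in\com(Q)$ and the entire content of the lemma reduces to the single identity $R_x=M_x^{-1}$.
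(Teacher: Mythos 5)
Your proof is correct and follows essentially the same route as the paper: both reduce well-definedness to the single genuine collision $L_x=R_x$ with $x\in\com(Q)$ and then show that $R_x=M_x^{-1}$ is equivalent to $x^3=1$ there. The only difference is bookkeeping --- you first identify $L_x^{-1}$ with $L_{x^{-1}}$ and $R_x^{-1}$ with $R_{x^{-1}}$ (checking the assigned values match under this identification) to shrink the label set, whereas the paper runs through six explicit implications, of which your one surviving case is its case (c); both versions are complete.
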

\begin{proof}
Note that if $L_x\in\{L_y,R_y,L_y^{-1},R_y^{-1}\}$ then $x\in\{y,y^{-1}\}$, and similarly for $R_x$. To check that $\rho$ is well-defined, we therefore need to establish the following implications: (a) $L_x=L_x^{-1}$ implies $R_x = R_x^{-1}$, (b) $R_x=R_x^{-1}$ implies $M_x^{-1}=M_x$, (c) $L_x=R_x$ implies $R_x=M_x^{-1}$, (d) $L_x=R_x^{-1}$ implies $R_x = M_x$, (e) $L_x^{-1} = R_x$ implies $R_x^{-1}=M_x^{-1}$, and (f) $L_x^{-1}=R_x^{-1}$ implies $R_x^{-1}=M_x$.

The implication (a) is immediate, for if $L_x=L_x^{-1}$ then $x=x^{-1}$ and $R_x = R_x^{-1}$. The argument for (b) is similar. The implications (c) and (f) are equivalent, and so are the implications (d) and (e).

Concerning (c), suppose that $L_x=R_x$, that is, $x\in\com(Q)$. We will show that then $R_x = M_x^{-1}$ iff $x^3=1$. Certainly if $R_x=M_x^{-1}$ then evaluating at $1$ yields $x=x^{-2}$, that is, $x^3=1$. Conversely, if $x^3=1$ then $R_x^3=\id_Q$ and $R_x = R_x^{-2} = R_x^{-1}L_x^{-1} = M_x^{-1}$.

Finally, for (d), suppose that $L_x=R_x^{-1}$, that is, $x^2=1$ and $x\in\com(Q)$. We will again show that $R_x=M_x$ iff $x^3=1$. Certainly if $R_x=M_x$ then $R_x=R_xL_x$, $L_x=\id_Q$, $x=1$ and $x^3=1$. Conversely, if $x^3=1$ then from $x^2=1$ we deduce $x=1$ and $R_x=M_x$.
\end{proof}

\begin{cor}\label{Cr:TrialityMap}
If $Q$ is a Moufang loop with trivial nucleus then the mapping $\rho$ of Lemma \emph{\ref{Lm:TrialityMap}} is well-defined.
\end{cor}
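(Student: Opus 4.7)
The plan is to reduce to Lemma \ref{Lm:TrialityMap}, which already characterizes well-definedness of $\sigma$ by the condition that $x^3 = 1$ for every $x \in \com(Q)$. So I need only verify, for a Moufang loop $Q$ with $\nuc(Q) = 1$, that every commutant element is a $3$-torsion element. The whole argument will hinge on the pseudoautomorphism statement \eqref{Eq:InnPseudo}.

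Fix $x \in \com(Q)$. The key observation is that $L_x = R_x$, which gives $T_x = R_x^{-1} L_x = \id_Q$. Applying \eqref{Eq:InnPseudo}, the pair $(x^{-3}, T_x) = (x^{-3}, \id_Q)$ lies in $\lps(Q)$. Unpacking the definition of a pseudoautomorphism with identity component, this says $x^{-3} \cdot y \cdot z = x^{-3} \cdot yz$, i.e., $(x^{-3} y)z = x^{-3}(yz)$ for all $y,z \in Q$. In other words, $x^{-3} \in \nuc_\ell(Q)$.

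Since the four nuclei of a Moufang loop coincide (as recalled in Subsection \ref{Ss:Moufang}), we obtain $x^{-3} \in \nuc(Q)$. The hypothesis $\nuc(Q) = 1$ then forces $x^3 = 1$. Applying Lemma \ref{Lm:TrialityMap} concludes the proof.

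I do not expect any genuine obstacle here: once one recognizes that $x \in \com(Q)$ trivializes the inner mapping $T_x$, the pseudoautomorphism identity \eqref{Eq:InnPseudo} turns into the defining relation of the left nucleus, and the triviality of $\nuc(Q)$ finishes the job. The only mild subtlety is noting that the defining relation of $(c, \id_Q) \in \lps(Q)$ is literally the left nuclear identity for $c$, so no additional Moufang computation is needed.
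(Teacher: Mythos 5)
Your proof is correct and follows essentially the same route as the paper: use $T_x=\id_Q$ for $x\in\com(Q)$, invoke $(x^{-3},T_x)\in\lps(Q)$ to place $x^{-3}$ in the nucleus, and conclude $x^3=1$ from $\nuc(Q)=1$ before applying Lemma \ref{Lm:TrialityMap}. Your explicit remark that the identity-component pseudoautomorphism relation is literally the left-nuclear identity (combined with the coincidence of the nuclei) is a slightly more careful phrasing of the step the paper states directly.
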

\begin{proof}
Let $x\in\com(Q)$. Then $T_x=\id_Q$, so in particular $T_x$ is an automorphism of $Q$. Since $(x^{-3},T_x)\in\lps(Q)$ by \eqref{Eq:InnPseudo}, we have $x^{-3}T_x(y)\cdot T_x(z) = x^{-3}T_x(yz) = x^{-3}(T_x(y)T_x(z))$ for all $y,z\in Q$. Therefore $x^{-3}\in\nuc(Q)=1$.
\end{proof}

\begin{lem}\label{Lm:Extend}
Let $Q$ be a Moufang loop with trivial nucleus. Then there exists a unique $\rho\in\aut(\mlt(Q))$ such that $\rho(L_x) = R_x$ and $\rho(R_x) = M_x\m$, for every $x\in Q$.
This automorphism satisfies $\rho^3=\id_{\mlt(Q)}$, and if $\vhi$ is an inner mapping of $Q$ with companion $c$ (when seen as a pseudoautomorphism) then $\rho(\vhi) = R_c^{-1}\vhi$.
\end{lem}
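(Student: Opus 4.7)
The plan is to derive uniqueness immediately from the fact that $\{L_x, R_x : x \in Q\}$ generates $\mlt(Q)$, and for existence to supply a global formula for $\sigma$ and verify the homomorphism property.

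For the construction, every $g \in \mlt(Q)$ factors canonically as $g = L_{g(1)} \varphi_g$ with $\varphi_g := L_{g(1)}^{\m} g \in \inn(Q)$. In a Moufang loop every inner mapping is a pseudoautomorphism, and the triviality of $\nuc(Q)$ makes the companion unique: if $(c, f)$ and $(c', f)$ both lie in $\lps(Q)$, then by \eqref{Eq:LPs} their ratio is $(c c'^{\m}, \id_Q) \in \lps(Q)$, forcing $c c'^{\m} \in \nuc_\ell(Q) = 1$ and hence $c = c'$. Denoting the unique companion of $\varphi_g$ by $c_g$, I define
\begin{displaymath}
\sigma(g) := R_{g(1)} R_{c_g}^{\m} \varphi_g.
\end{displaymath}
Checking the prescribed values on generators is a short calculation. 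For $g = L_x$ one has $\varphi_g = \id$ with $c_g = 1$, giving $\sigma(L_x) = R_x$. For $g = R_x$ one has $\varphi_g = L_x^{\m} R_x = T_x^{\m}$, and inverting $(x^{-3}, T_x) \in \lps(Q)$ from \eqref{Eq:InnPseudo} (using $T_x(x^3) = x^3$) yields $c_g = x^3$; the expression $R_x R_{x^{-3}} L_x^{\m} R_x$ then collapses to $M_x^{\m}$ via the right power alternative $R_{x^i} R_{x^j} = R_{x^{i+j}}$ and the commutation $L_x R_{x^n} = R_{x^n} L_x$, which follows inductively from flexibility and the Moufang identity $x(y \cdot xz) = (xy \cdot x) z$.

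The main obstacle is verifying that $\sigma$ is a homomorphism, i.e., $\sigma(g_1 g_2) = \sigma(g_1) \sigma(g_2)$. Writing $g_i = L_{a_i} \varphi_i$ with companion $c_i$, the composition rule $(c_1, \varphi_1)(c_2, \varphi_2) = (c_1 \varphi_1(c_2), \varphi_1 \varphi_2)$ from \eqref{Eq:LPs} together with the operator identity $L_c \varphi L_a = L_{c \varphi(a)} \varphi$---which is just the pseudoautomorphism relation rewritten---allow explicit computation of the factorization of the product $g_1 g_2$. Expanding both $\sigma(g_1 g_2)$ and $\sigma(g_1)\sigma(g_2)$ under these data and matching the two expressions reduces to Moufang manipulations, principally flexibility, the right power alternative, and the identity $x^{\m}(xy \cdot z) = y x^{\m} \cdot xz$ from \eqref{Eq:m2}.

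Once $\sigma$ is established as an automorphism of $\mlt(Q)$, the remaining claims are immediate. On generators,
\begin{displaymath}
\sigma^3(L_x) = \sigma^2(R_x) = \sigma(M_x^{\m}) = (\sigma(L_x)\sigma(R_x))^{\m} = (R_x M_x^{\m})^{\m} = M_x R_x^{\m} = L_x,
\end{displaymath}
and analogously $\sigma^3(R_x) = R_x$, yielding $\sigma^3 = \id_{\mlt(Q)}$. The inner-mapping formula is a direct specialization of the global definition: for $\varphi \in \inn(Q)$ with companion $c$ we have $\varphi(1) = 1$, whence $\sigma(\varphi) = R_1 R_c^{\m} \varphi = R_c^{\m} \varphi$.
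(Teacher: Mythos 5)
Your uniqueness argument, the closed formula $\sigma(g)=R_{g(1)}R_{c_g}\m\vhi_g$, the uniqueness of companions forced by $\nuc_\ell(Q)=1$, and the verification on the generators $L_x$, $R_x$ are all correct; the formula does agree with the paper's $\sigma$, and it has the pleasant side effect of making well-definedness on the generating set (the content of Lemma \ref{Lm:TrialityMap} and Corollary \ref{Cr:TrialityMap}) automatic. The final two claims, $\sigma^3=\id_{\mlt(Q)}$ and $\sigma(\vhi)=R_c\m\vhi$, are also fine \emph{once} $\sigma$ is known to be multiplicative.

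The gap is exactly there: multiplicativity is the entire content of the lemma, and you dispose of it with ``expanding both sides \dots reduces to Moufang manipulations.'' That expansion is not routine. Writing $g_i=L_{a_i}\vhi_i$, your identity $L_{c}\vhi L_a=L_{c\vhi(a)}\vhi$ gives $g_1g_2=L_{a_1}L_{c_1}\m L_{c_1\vhi_1(a_2)}\cdot\vhi_1\vhi_2$, whose translation part is a product of three left translations, not a single one. To reach the canonical form $L_{a_1\vhi_1(a_2)}\vhi_{g_1g_2}$ you must split off an inner mapping of $L_{u,v}$ type, and then the companion $c_{g_1g_2}$ is \emph{not} simply $c_1\vhi_1(c_2)$ from \eqref{Eq:LPs} but that element corrected by the companion of the extra $L_{u,v}$ factor; on the other side, $\sigma(g_1)\sigma(g_2)$ requires commuting $\vhi_1$ past right translations, which by the autotopism $(L_{c_1}\vhi_1,\vhi_1,L_{c_1}\vhi_1)$ introduces conjugation by $L_{c_1}$. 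None of this bookkeeping is supplied, and it is precisely the nontrivial part. The paper sidesteps the entire computation: each generator $\psi\in\{L_x^{\pm1},R_x^{\pm1}\}$ sits together with $\sigma(\psi)$ as the first two coordinates of one of the autotopisms in \eqref{Eq:t4}; composing these and invoking Corollary \ref{Cr:TrivNucAtp} (trivial nucleus makes the second coordinate of an autotopism a function of the first) shows that every relation among the generators is preserved, so $\sigma$ extends to a homomorphism. If you want to keep your global formula, the efficient repair is to prove that $(g,\sigma(g),h)\in\atp(Q)$ for some $h$ and every $g\in\mlt(Q)$; multiplicativity then follows from Corollary \ref{Cr:TrivNucAtp} rather than from a direct expansion.
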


\begin{proof}
The first part is proved already in \cite[Theorem 6]{GlaubermanII} but let us give a proof. The mapping $\rho$ of Lemma \ref{Lm:TrialityMap} is well-defined by Corollary \ref{Cr:TrialityMap}. Our first goal is to show that it is possible to extend it into an automorphism of $\mlt(Q)$. For that it suffices to verify that
\begin{equation}\label{Eq:t3}
	\psi_1\cdots \psi_k = \id_Q \quad \Leftrightarrow \quad \rho(\psi_1)\cdots \rho(\psi_k) = \id_Q
\end{equation}
whenever each $\psi_i$, $1\le i \le k$, belongs to $\{L_x^{\pm 1},R_x^{\pm 1}:x\in Q\}$.

The first Moufang identity of \eqref{Eq:m1} can be rewritten as $(L_x,R_x,M_x)\in\atp(Q)$. Substituting $yx^{-1}$ for $y$ in the second identity of \eqref{Eq:m2} yields $(zy)x^{-1} = zx\cdot x^{-1}(yx^{-1})$, which says $(R_x,M_x^{-1},R_x^{-1})\in\atp(Q)$. Taking inverses into consideration, we see that all four triples
\begin{equation}\label{Eq:t4}
	(L_x,R_x,M_x),\,(L_x\m,R_x\m,M_x\m),\,(R_x,M_x\m,R_x\m) \text{ and } (R_x\m,M_x,R_x)
\end{equation}
are autotopisms of $Q$, for every $x\in Q$.

Now, autotopisms may be chosen from this list in such a way that the first coordinate is equal to $\psi_i$ and the second coordinate is equal to $\rho(\psi_i)$, $1\le i \le k$. By composing these autotopisms we obtain an autotopism in which the first coordinate is equal to $\psi_1\cdots \psi_k$ and the second coordinate is equal to $\rho(\psi_1)\cdots \rho(\psi_k)$. Since all the (equal) nuclei of $Q$ are trivial, Corollary \ref{Cr:TrivNucAtp} implies that the first coordinate is trivial if and only if the second coordinate is trivial. We have proved $\rho\in\aut(\mlt(Q))$.

Let us establish $\rho^3=\id_{\mlt(Q)}$. We have $\rho(L_x) = R_x$, $\rho(R_x) = M_x^{-1} = R_x^{-1}L_x^{-1}$ and $\rho(M_x^{-1}) = \rho(R_x^{-1})\rho(L_x^{-1}) = M_xR_x^{-1} = L_xR_xR_x^{-1} = L_x$. This shows that the automorphism $\rho^3$ is identical on a generating set of $\mlt(Q)$, hence also on $\mlt(Q)$.

Finally, let $\vhi\in\inn(Q)$. Then $\vhi$ is a pseudoautomorphism with some companion $c\in Q$. This can also be expressed as $(L_c\vhi,\vhi,L_c\vhi)\in\atp(Q)$. Let us compose the autotopisms of \eqref{Eq:t4} so that the first coordinate of the resulting autotopism $(f,g,h)$ is equal to $L_c\vhi$. Certainly $g=\rho(f)$. By Corollary \ref{Cr:TrivNucAtp}, $(f,g,h)=(L_c\vhi,\vhi,L_c\vhi)$. Thus $R_c\rho(\vhi) = \rho(L_c)\rho(\vhi) = \rho(L_c\vhi)  = \rho(f) = g = \vhi$ and $\rho(\vhi) = R_c^{-1}\vhi$ follows.
\end{proof}

\begin{prop}\label{Pr:Proper}
Let $Q$ be a finite Moufang loop of order divisible by three. If the nucleus of $Q$ is trivial, then $Q$ possesses a proper subloop of order divisible by three.
\end{prop}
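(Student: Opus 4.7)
The plan is to use the triality $\sigma\in\aut(G)$ of Lemma \ref{Lm:Extend} (where $G=\mlt(Q)$) to produce an order-$3$ element of $G$ that is fixed by $\sigma$, and then to dichotomize on whether this element fixes $1\in Q$. In one case its fixed-point set in $Q$ is the desired proper subloop; in the other its value at $1$ is an element of $\com(Q)$ of order $3$ whose cyclic subgroup is the desired subloop. Since $3\mid|Q|=[G:\inn(Q)]$ we have $3\mid|G|$, and as $\sigma$ has order $3$ its orbits on $G$ have sizes $1$ or $3$, so $|C_G(\sigma)|\equiv|G|\equiv 0\pmod{3}$; by Cauchy's theorem applied to the group $C_G(\sigma)$, I pick $g\in C_G(\sigma)$ with $|g|=3$.

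Next, the proof of Lemma \ref{Lm:Extend} constructs for each $g\in G$ an autotopism $(g,\sigma(g),h)\in\atp(Q)$ by writing $g$ as a product of the generators $L_x^{\pm 1},R_x^{\pm 1}$ and composing the four listed autotopism types, and by Corollary \ref{Cr:TrivNucAtp} with $\nuc(Q)=1$ this autotopism is the unique one with first coordinate $g$. Since $\sigma(g)=g$, the defining identity is $g(x)g(y)=h(xy)$ for all $x,y\in Q$. Setting $y=1$ gives $h=R_{g(1)}g$, and setting $x=1$ gives $h=L_{g(1)}g$; writing $c=g(1)$ and equating, I obtain $L_c=R_c$, which is precisely the statement $c\in\com(Q)$.

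If $c=1$, then $g$ is a homomorphism, hence an automorphism of $Q$ of order $3$; the fixed-point set $F=\{x\in Q:g(x)=x\}$ is then a subloop of $Q$, properly contained in $Q$ because $g\ne\id_Q$, and the orbit decomposition of $Q$ under $\langle g\rangle$ into orbits of sizes $1$ or $3$ gives $|F|\equiv|Q|\equiv 0\pmod{3}$. If $c\ne 1$, then by Corollary \ref{Cr:TrialityMap} combined with Lemma \ref{Lm:TrialityMap} every element of $\com(Q)$ satisfies $x^3=1$, so $|c|=3$, and $\langle c\rangle=\{1,c,c^2\}$ is a cyclic subloop of order $3$; it is proper because otherwise $Q=\langle c\rangle$ would be a cyclic group of order $3$ with $\nuc(Q)=Q\ne 1$, contradicting the hypothesis. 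I expect the main technical step to be the extraction of $c\in\com(Q)$ from $\sigma(g)=g$ via the autotopism construction; the case analysis that follows is then routine.
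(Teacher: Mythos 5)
Your proposal is correct and follows the same overall strategy as the paper: exploit the triality automorphism $\sigma$ of Lemma \ref{Lm:Extend}, produce a $\sigma$-fixed element of order $3$ in $\mlt(Q)$, and show it is either an automorphism of order $3$ (whose fixed-point subloop works) or forces a commutant element of order $3$ (whose cyclic subloop works). You diverge from the paper at two steps, both in the direction of simplification. First, to obtain the $\sigma$-fixed element $g$ of order $3$, the paper forms the semidirect product $\mlt(Q)\ltimes\langle\sigma\rangle$, passes to a Sylow $3$-subgroup $S$ containing $\sigma$, and finds a nontrivial fixed point of $\sigma$ in the center of $P=\mlt(Q)\cap S$; you instead count fixed points of $\langle\sigma\rangle$ on all of $\mlt(Q)$ to get $3\mid|C_{\mlt(Q)}(\sigma)|$ and invoke Cauchy's theorem for groups --- this avoids Sylow theory entirely and is cleaner. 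Second, to extract the commutant element, the paper decomposes $\psi=L_x\vhi$ with $\vhi\in\inn(Q)$ and applies the companion formula $\sigma(\vhi)=R_c^{-1}\vhi$; you instead take the autotopism $(g,\sigma(g),h)=(g,g,h)$ guaranteed by the construction inside the proof of Lemma \ref{Lm:Extend} and evaluate at $x=1$ and $y=1$ to get $L_{g(1)}=R_{g(1)}$. Since $g(1)=x$ under the paper's decomposition, the two computations are equivalent, but yours bypasses the companion formula (the last assertion of Lemma \ref{Lm:Extend}) and the explicit factorization through $\inn(Q)$. Both of your case analyses ($c=1$ versus $c\ne 1$) are complete, including properness of the resulting subloops and divisibility of their orders by $3$.
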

\begin{proof}
Let $\rho$ be as in Lemma \ref{Lm:Extend}, $H=\langle\rho\rangle$, and let $G=\mlt(Q)\rtimes H$ be the semidirect product defined by the natural action of $H$ on $\mlt(Q)$. Hence $\lambda \rho^i \cdot \mu \rho^j = \lambda\rho^i(\mu)\cdot  \rho^{i+j}$ for all $\lambda,\mu \in \mlt(Q)$ and $i,j\in\mathbb Z$.

By Lemma \ref{Lm:Extend}, $\rho^3=\id_{\mlt(Q)}$. Suppose that $\rho = \id_{\mlt(Q)}$. Then $L_x = \rho(L_x) = R_x$, $Q$ is a commutative Moufang loop of exponent $3$ (by Corollary \ref{Cr:TrialityMap} and Lemma \ref{Lm:TrialityMap}, or see \cite[Lemma 2]{Doro}), and any $1\ne x\in Q$ gives rise to a subloop of order $3$, necessarily proper since $Q$ is not a group. We can therefore suppose that $|H|=3$. Consider a 3-Sylow subgroup $S$ of $G$ that contains $H$. Let $P=\mlt(Q)\cap S$. As $S$ contains $H$, the underlying set of $S$ is equal to $P\times H$. Since $|\mlt(Q)|=|Q|\cdot|\inn(Q)|$ in any loop, and since $3$ divides $|Q|$ here, it follows that $3$ divides $|\mlt(Q)|$. Then $|S|>3$ and $P>1$. As $\mlt(Q)$ is normal in $G$, the group $P=\mlt(Q)\cap S$ is normal in $S$. The center $Z(P)\ne 1$ of $P$ is also normal in $S$, being a characteristic subgroup of $P$. The automorphism $\rho$ acts by conjugation on $Z(P)$. Since $\rho^3=\id_{\mlt(Q)}$ and $Z(P)\ne 1$ is a $3$-group, the conjugation by $\rho$ also fixes some $1\ne\psi\in Z(P)$. Passing to a suitable power of $\psi$, we can assume that $|\psi|=3$. Note that $\psi\rho = \rho\psi = \rho(\psi)\rho$ implies $\rho(\psi)=\psi$.

Write $\psi=L_x\vhi$ for some $x\in Q$ and $\vhi \in\inn(Q)$. Let $c$ be the companion of $\vhi$. By Lemma \ref{Lm:Extend}, $L_x\vhi = \psi = \rho(\psi) = \rho(L_x\vhi) = \rho(L_x)\rho(\vhi) = R_xR_c^{-1}\vhi$, hence $L_x=R_xR_c^{-1}$, $c=1$ (so $\vhi$ is an automorphism), $L_x=R_x$ and $x\in\com(Q)$. By Corollary \ref{Cr:TrialityMap} and Lemma \ref{Lm:TrialityMap}, $x^3=1$. If $x\ne 1$ then $\langle x\rangle$ is the sought-after subloop. Else $x=1$ and $\psi=\vhi$ is an automorphism of order $3$. Since $3$ divides $|Q|$, it then also divides the order of the proper subloop $\mathrm{Fix}(\vhi) = \{u\in Q:\vhi(u) = u\}$.
\end{proof}

We are ready to prove the Cauchy property for $p=3$ in Moufang loops. Let $Q$ be a finite Moufang loop whose order is divisible by $3$. We proceed by induction on $|Q|$. Let $N=\nuc(Q)$. If $N=1$ then Proposition \ref{Pr:Proper} yields a proper subloop of $Q$ whose order is divisible by $3$, and we are done by the induction assumption. Suppose from now on that $N\ne 1$. If $3$ divides $|N|$ then $N$ contains an element of order $3$ since $N$ is a group. Else $3$ divides $|Q/N|$, and by the induction assumption there is $x\in Q$ such that $xN$ is of order $3$ in $Q/N$. Since $xN$ is the homomorphic image of $x$ under the natural projection modulo $N$, the order of $xN$ divides the order of $x$. A suitable power of $x$ is then of order $3$.

\end{document}